\title{Space crossing numbers}
\author{Boris Bukh\footnote{\texttt{B.Bukh@dpmms.cam.ac.uk}, University of Cambridge 
and Churchill College, United Kingdom.}
\and Alfredo Hubard\footnote{\texttt{hubard@cims.nyu.edu}. Courant Institute of Mathematical Sciences, 
New York University, United States.}}
\date{}
\newcommand*{\abs}[1]{\lvert #1\rvert}                         
\newcommand*{\norm}[1]{\lVert #1\rVert}                        
\newcommand*{\R}{\mathbb{R}}                                   
\newcommand*{\Z}{\mathbb{Z}}                                   
\newcommand*{\N}{\mathbb{N}}                                   
\newcommand*{\Sph}{\mathbb{S}}                                 
\newcommand*{\TSph}{T\mathbb{S}}                               
\newcommand*{\Thom}{\mathcal{T}}                               
\newcommand*{\E}{\mathbb{E}}                                   
\newcommand*{\F}{\mathcal{F}}                                  
\newcommand*{\veps}{\varepsilon}                               
\newcommand*{\defeq}{\stackrel{\text{\tiny def}}{=}}           
\newcommand*{\bd}{\mathbf{d}}                                  
\DeclareMathOperator{\crn}{cr}                                 
\DeclareMathOperator{\rcrn}{lin-cr}                            
\DeclareMathOperator{\lk}{lk}                                  
\DeclareMathOperator{\sign}{sgn}                               
\DeclareMathOperator{\conv}{conv}                              
\DeclareMathOperator{\convcone}{conv-cone}                     
\DeclareMathOperator{\dist}{dist}                              
\newtheorem{theorem}{Theorem}
\newtheorem{lemma}[theorem]{Lemma}
\newtheorem{proposition}[theorem]{Proposition}
\newtheorem{corollary}[theorem]{Corollary}
\newenvironment{remark}[1][Remark]
{\begin{trivlist}\item\textbf{#1:\ }} {\end{trivlist}}
\newcommand*{\proofc}[1][\proofname]{\noindent\textit{#1.} \ignorespaces}
\begin{document}
\maketitle

\begin{abstract}
We define a variant of the crossing number for an embedding of a graph $G$ into $\R^3$,
and prove a lower bound on it which almost implies the classical crossing lemma.
We also give sharp bounds on the rectilinear space crossing numbers of pseudo-random graphs.
\end{abstract}

\section*{Introduction}
All the graphs in this paper are simple, i.e.\ they contain no loops or multiple edges. 
The \emph{crossing number} of a graph $G=(V,E)$ is the minimum number of crossings between
edges of $G$ among all the ways to draw $G$ in the plane. It is denoted $\crn(G)$. 
The edges in a drawing of $G$ need not be line segments, they are allowed to be 
arbitrary continuous curves. If one restricts to the straight-line drawings, then
one obtains the \emph{rectilinear crossing number} $\rcrn(G)$. It is clear that
$\crn(G)\leq \rcrn(G)$, and there are examples where $\crn(G)=4$, but $\rcrn(G)$ is
unbounded \cite{bienstock_dean}. The principal result about crossing numbers is the crossing
lemma of Ajtai--Chv\'atal--Newborn--Szemer\'edi and Leighton \cite{orig_crossing,orig_crossing_leighton} 
which states that
\begin{equation}\label{eq_crossing_lemma}
\crn(G)\geq c\frac{\abs{E}^3}{\abs{V}^2}\qquad \text{whenever }\abs{E}\geq C\abs{V}.
\end{equation}
The inequality is sharp apart from the values of $c$ and $C$ (see \cite{pach_toth_constant} for
the best known estimate on $c$). 
The most famous applications of the crossing lemma are short and elegant proofs 
by Sz\'ekely \cite{szekely_crossing} of Szemer\'edi--Trotter theorem on point-line 
incidences and of Spencer--Szemer\'edi--Trotter theorem on the unit distances. 
Another remarkable application is the bound on the number of halving lines by 
Dey\cite{dey_ksets}. In this paper we propose an extension of 
the crossing number to $\R^3$, in such a way that the corresponding ``space crossing 
lemma'' (Theorem~\ref{thm_space_crossing_lemma} below) implies 
\eqref{eq_crossing_lemma} (up to a logarithmic factor). 

A \emph{spatial drawing} of a graph $G$ is representation of vertices 
of $G$ by points in $\R^3$,  and edges of $G$ by continuous curves. A 
\emph{space crossing} consists of a quadruple of vertex-disjoint 
edges $(e_1,\dotsc,e_4)$ and a line $l$ that meets these four edges. 
The \emph{space crossing number} of $G$, denoted $\crn_4(G)$ is 
the least number of crossings in any spatial drawing of $G$. As 
in the planar case, the \emph{spatial rectilinear crossing number} 
$\rcrn_4(G)$ is obtained by restricting to straight-line spatial drawings.

For a graph $G$ pick a drawing of $G$ in the plane with the fewest 
crossings. By perturbing the drawing  slightly, we may assume that 
there are no points where three vertex-disjoint edges meet.
The drawing can be lifted to a drawing $G$ on a large sphere without 
changing any of the crossings. Since no line meets the sphere 
in more than two points, every space crossing in the resulting 
spatial drawing comes from a pair of crossings in the planar 
drawing. Thus,
\begin{equation}\label{eq_plane_vs_space}
\crn_4(G)\leq \binom{\crn(G)}{2}.
\end{equation}
Let us note that the space crossing number is not the usual crossing number in disguise,
for the inequality in the reverse direction does not hold: 
\begin{proposition}[Proof is on p.~\pageref{pf_prop_incomparability}]\label{prop_incomparability}
For every natural number $n$ there is a graph $G$
with $\crn_4(G)=0$ and $\crn(G)\geq n$. 
\end{proposition}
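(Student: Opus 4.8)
We need a graph with large crossing number that admits a spatial drawing with no space crossing—i.e., no line pierces four pairwise vertex-disjoint edges. The natural idea is to put all the edges onto a surface on which no line can meet four independent edges. A sphere already forbids any line from meeting it more than twice, but a single sphere cannot carry a graph of large crossing number without the graph being planar-like. Instead, the plan is to use a family of **nested spheres**, one ``level'' per bad interaction: draw $G$ so that each edge lives on one of $k$ concentric spheres $S_1\subset S_2\subset\dots\subset S_k$ (say, spheres of rapidly increasing radius, or disjoint small spheres strung along a curve), with the combinatorics arranged so that edges sharing a sphere are vertex-disjoint from edges on other spheres only when we want them to be.

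Let me reconsider and go with the cleanest construction. Take $G$ to be a disjoint union; crossing number is additive over connected components (more precisely over blocks), so $\crn\bigl(\bigsqcup_{i=1}^m H\bigr) = m\cdot \crn(H)$. Pick $H=K_5$ or $K_{3,3}$, or better $K_n$ so that $\crn(K_n)$ is already large, but the point of taking many disjoint copies is that distinct copies are automatically vertex-disjoint, which is exactly the dangerous case. So: place each copy $H_i$ of $H$ on its own tiny sphere $\Sigma_i$, and position the spheres $\Sigma_1,\dots,\Sigma_m$ in ``general position'' with radii small relative to their separation, so that no line meets more than two of them, and on any single sphere no line meets more than two points hence at most two edges. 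Then any line meets edges from at most two spheres, at most two edges per sphere: at most four edges total, but to get a genuine space crossing we need four edges that are pairwise vertex-disjoint. Two edges on the same sphere belong to the same copy $H_i$, so if $H$ has the property that any two vertex-disjoint edges of $H$, together with... hmm, that still allows $2+2$.

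The fix: make the spheres so that no line meets **three** of them and on each sphere the edges drawn are pairwise *not* vertex-disjoint—impossible if $H$ has a matching of size $2$. So instead take $H$ to be a graph with large crossing number but in which every two edges share a vertex: that forces $H$ to be a triangle or a star, which has crossing number $0$. Dead end. Therefore we genuinely must prevent the $2+2$ pattern geometrically. The right move: force edges from different copies to intersect a common line only in a controlled way by nesting the copies so a transversal line of $\Sigma_i$ and $\Sigma_j$ must pass ``through the middle'' and there is no room. Concretely, arrange $\Sigma_1,\dots,\Sigma_m$ along a convex curve (e.g.\ the moment curve) with radii shrinking fast enough; then a line meeting $\Sigma_i$ and $\Sigma_j$ for $i<j$ is nearly tangent and, by a compactness/continuity argument, meets no $\Sigma_k$ for $k\notin\{i,j\}$, and on each of $\Sigma_i,\Sigma_j$ it meets the sphere in two points. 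The genuinely new ingredient we must secure is that such a near-tangent line, when it does meet two edges on $\Sigma_i$, meets two edges that in fact *share a vertex*—which we can arrange by drawing $H$ on its sphere so that the two intersection points with any tangent-ish line lie on a pair of adjacent edges. That is where the main difficulty lies: choosing the drawing of $H$ on a sphere, plus the placement of the spheres, so that these two constraints are simultaneously satisfiable. I expect this to reduce to a finite, explicit check for $H=K_5$ and a scaling argument for the spheres, and once that is done, summing over the $\binom m2$ pairs gives no space crossing while $\crn(G)=m\,\crn(K_5)\to\infty$.

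The cleanest way to make the last point rigorous is to avoid the ``adjacent edges'' subtlety entirely by a dimension/degree bump: instead of spheres, place each $H_i$ on the boundary of a tiny ball but route its edges as chords *inside* that ball while keeping the vertices on the sphere, and choose the balls pairwise so far apart and small that any line meets at most one ball—then a space crossing would need four vertex-disjoint edges of a single $H_i$ met by one line, which one forbids by drawing each $H_i$ with few enough edges per ``slab'' (take $H_i$ itself small, with at most three pairwise disjoint edges is still possible, so take $H$ on at most $7$ vertices—$\crn(K_7)=9>0$—no, still has a $3$-matching; take instead $H$ with no three pairwise disjoint edges, i.e.\ a graph whose matching number is $\le 2$: by the Gallai–Edmonds / König-type bound such $H$ has a vertex cover of size $\le 4$... and $K_5$ has matching number $2$! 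Any two disjoint edges of $K_5$ cover $4$ vertices, leaving one, so $\nu(K_5)=2$.). So the final plan is: take $G=\bigsqcup_{i=1}^n K_5^{(i)}$, draw each copy in a minuscule ball, the balls pairwise far apart so every line meets at most one ball; then any line meets edges of a single $K_5$, and since $K_5$ has no three pairwise vertex-disjoint edges there is no quadruple of pairwise vertex-disjoint edges at all, so $\crn_4(G)=0$, while $\crn(G)=n\cdot\crn(K_5)=n\ge n$. The only thing to verify carefully is that $n$ balls can be placed so no line meets two of them (e.g.\ on a line, with radii decaying geometrically, a line meeting two balls is pinned down too tightly to meet a third—or simply: put the centers on a circle and shrink radii, a transversal of two small balls is close to a secant of the circle and misses the others), which is an easy compactness argument. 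That routine geometric lemma is the one place to be a little careful; everything else is immediate.
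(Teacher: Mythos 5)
There is a genuine gap, and it sits exactly at the step you flagged as ``the one place to be a little careful.'' Your final construction ($n$ disjoint copies of $K_5$, each drawn in a tiny ball, the balls far apart) rests on the claim that the balls can be placed so that \emph{no line meets two of them}. That claim is false: for any two balls, the line through their centers meets both, no matter how small or far apart they are. What your parenthetical arguments (centers on a convex curve, radii shrinking) actually give is the weaker statement that no line meets \emph{three} of the balls. But with ``at most two balls per line'' you are back to the $2{+}2$ pattern that you yourself identified in the middle of the proposal and never resolved: since $\nu(K_5)=2$, a single copy can still contribute two pairwise vertex-disjoint edges, so a line meeting two vertex-disjoint chords of copy $i$ and two vertex-disjoint chords of copy $j$ produces four pairwise vertex-disjoint edges, i.e.\ a space crossing. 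Nothing in the construction rules this out: the lines meeting two fixed skew chords inside one ball form a two-parameter family sweeping out a full-dimensional region of $\R^3$, which can perfectly well contain the other ball; whether some of those lines also hit two disjoint chords there is a delicate question you would have to control, not a routine compactness lemma. (Drawing each $K_5$ on the surface of its small sphere rather than with interior chords does not help either, since each sphere still offers two intersection points and hence potentially two vertex-disjoint edges.)

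For comparison, the paper sidesteps this entirely by using a \emph{single} sphere: it draws a $3$-connected cubic planar graph $H$ crossing-free on the surface of one sphere, so any line collects at most two vertex-disjoint edges from the whole planar part, and then adds exactly \emph{one} extra chord edge $uv$, for a total of at most three edges on any line --- short of the four needed for a space crossing. The large planar crossing number does not come from gluing many nonplanar pieces but from Riskin's theorem, which forces the single added edge $uv$ to cross at least as many edges as the face-distance between $u$ and $v$ in the unique planar embedding of $H$. If you want to salvage your disjoint-union idea, you would need an actual argument excluding common transversals to two disjoint edges in each of two copies, and no soft positioning argument in the proposal supplies one.
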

The principal result that justifies the introduction of the space crossing number 
is the following generalization of the crossing lemma.
\begin{theorem}[Proof is on p.~\pageref{pf_thm_space_crossing_lemma}]\label{thm_space_crossing_lemma}
Let $G=(V,E)$ be an arbitrary graph, then
\begin{equation*}
\crn_4(G)\geq \frac{\abs{E}^6}{4^{179}\abs{V}^4\log_2^2 \abs{V}},
\end{equation*}
whenever $\abs{E}\geq 4^{41}\abs{V}$.
\end{theorem}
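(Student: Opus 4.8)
The plan is to isolate a topological ``base case'' and then amplify it geometrically. For the base case I would prove that there is an absolute constant $C_0$ so that \emph{every} spatial drawing of a graph $H$ with $\abs{E(H)}\ge C_0\abs{V(H)}$ has at least one space crossing. The source is intrinsic linking: once $H$ has enough edges it contains a subdivision of a fixed intrinsically linked graph (say $K_6$, or a slightly larger graph chosen so the subdivision paths can be made long), and in any spatial embedding two of its cycles $C_1,C_2$ are vertex-disjoint with $\lk(C_1,C_2)\neq0$. A pair of disjoint curves with nonzero linking number has a line meeting $C_1$ twice and $C_2$ twice; I would get this from a min--max / degree argument on the $2$-parameter family of secants through one point of $C_1$ and one point of $C_2$, where $\lk\neq0$ is exactly the obstruction that forces a ``doubled'' secant. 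One then has to upgrade ``two points of $C_1$'' to two \emph{vertex-disjoint} edges of $C_1$ (and likewise for $C_2$): this is arranged by taking the cycles long and perturbing, and it is the first place the bookkeeping becomes delicate. With the base case in hand, deleting one edge from each space crossing of an optimal drawing gives the linear bound $\crn_4(G)\ge\abs E-C_0\abs V$.

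The linear bound fed into the usual random-subgraph trick (keep each vertex independently with probability $p$; a space crossing survives with probability $p^8$, an edge with $p^2$, a vertex with $p$) already yields a polynomial lower bound, but with exponents weaker than the $\abs E^6/\abs V^4$ claimed. To reach the stated bound I would instead exploit projections. Fix an optimal spatial drawing $D$; for a direction $v\in\Sph^2$ the orthogonal projection $D_v$ is a planar drawing of $G$, so it has at least $\crn(G)$ crossings (one may instead use $\abs E-3\abs V$ from Euler to stay self-contained, at the cost of worse constants). The point is that one space crossing is ``visible'' in $D_v$ over a positive-measure set of $v$: if a line $\ell$ meets $e_1,\dots,e_4$ then for $v$ close to the direction of $\ell$ the four curves $\pi_v(e_1),\dots,\pi_v(e_4)$ all pass near a single point, so the $\binom{4}{2}$ crossings among them cluster in a small ball. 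Conversely, if $\crn_4(D)$ is small then for almost every $v$ the drawing $D_v$ has no such clusters — its crossings are ``spread out'' at every scale. Playing ``many crossings in each $D_v$'' against ``crossings spread out'', and integrating over $v$, should force $\crn_4(D)$ to be large; summing the estimate dyadically over the two relevant parameters (the angular scale near the direction of $\ell$ and the spatial scale of a cluster in $D_v$) is what produces the two logarithmic factors, and tracking the losses honestly through the whole chain is what produces a constant like $4^{179}$.

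The main obstacle is this geometric conversion: making precise and quantitative the implication ``few space crossings $\Rightarrow$ every projection is crossing-spread-out'', and then deriving a contradiction with the lower bound on the number of crossings of $D_v$. One has to control how a near-concurrency of four projected edges in $D_v$ pins down an actual line transversal in space (roughly, four curves through a common $O(\theta)$-ball produce a line of direction near $v$ meeting all four once $\theta$ is small relative to the geometry), and one has to keep vertex-disjointness of the quadruple throughout — pairs of crossings in $D_v$ sharing an edge, or quadruples of edges that are not pairwise vertex-disjoint, must be discarded, and one must verify that only a controlled fraction of the count is lost. A secondary technical point is that the optimal drawing need not be rectilinear, so the transversal-counting has to be carried out for general continuous curves, which is why the base case is phrased through linking numbers rather than through line transversals to segments.
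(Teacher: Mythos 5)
Your topological ingredients (intrinsic linking of $K_6$-subdivisions, a transversal line forced by nonzero linking number) are the right ones and match the paper's, and you correctly diagnose that boosting a linear bound $\crn_4(G)\geq\abs{E}-C_0\abs{V}$ by $p$-sampling only yields $\abs{E}^7/\abs{V}^6$. But the step you substitute to do better is where the proposal has a genuine gap. To reach $\abs{E}^6/\abs{V}^4$ one must first prove a bound \emph{quadratic} in $\abs{E}$, of the form $\crn_4(G)\geq c_1\abs{E}^2-c_2\abs{V}^2$ (the paper gets $\crn_4(G)\geq\abs{E}^2/2^{28}\log_2^2\abs{V}$), and your projection scheme never establishes anything of this shape. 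Worse, it faces a concrete obstruction: the paper's Proposition~\ref{prop_incomparability} exhibits drawings (a $3$-connected planar graph on a large sphere plus one chord) with $\crn_4=0$ for which every projection $D_v$ has arbitrarily many crossings. So ``many crossings in every projection'' carries no information about $\crn_4(D)$ by itself, and the implication you need must use far more than crossing counts and cluster scales. Moreover, the quantitative claim that four projected edges meeting a common $O(\theta)$-ball pin down an actual line transversal cannot be made uniform --- the threshold $\theta$ depends on the drawing --- so the dyadic bookkeeping you propose cannot be closed. You flag this conversion as ``the main obstacle,'' and it is: it is the theorem, not a technicality.

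The paper obtains the quadratic bound combinatorially. First, $V$ is split into two classes so that each induced half retains at least $\abs{E}/8$ edges (a second-moment argument). In each half one repeatedly extracts edge-disjoint subdivisions of $K_6$ via the Kostochka--Pyber theorem, obtaining $J\gtrsim\abs{E}/\log_2\abs{V}$ of them per half; Conway--Gordon then yields $J$ edge-disjoint pairs of vertex-disjoint cycles with odd linking number in each half. The key topological lemma is about \emph{four} curves, not two: if $\lk(C_1,C_2)\neq0$ and $\lk(C_3,C_4)\neq0$, then some line meets all four (proved via the intersection product in the Thom space of $\TSph^2$, using that the square of the Thom class is dual to the Euler class, which is nonzero for $\Sph^2$). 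Applying this to every cross pair --- one linked pair from each half --- produces $J^2$ distinct space crossings, and only then does the $p^8$ sampling give the stated bound. Your two-curve statement (a quadrisecant of a single nontrivial link, meeting $C_1$ twice and $C_2$ twice) yields only one space crossing per linked pair, so it can never leave the linear regime.
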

Since \eqref{eq_crossing_lemma} is sharp, in the light of
the argument that led to \eqref{eq_plane_vs_space}  there
are graphs on the sphere for which the bound in Theorem~\ref{thm_space_crossing_lemma} 
is tight up  to the logarithmic factor. In the drawings of these graphs, 
the edges are of course not straight. It turns out that 
there are also straight-line spatial drawings for which 
Theorem~\ref{thm_space_crossing_lemma} is tight.
\begin{theorem}[Proof is on p.~\pageref{pf_thm_constr}]\label{thm_constr}
For all positive integers $m$ and $n$ satisfying $m\leq \binom{n}{2}$ there is
a graph $G$ with $n$ vertices and $m$ edges, and rectilinear space crossing number
at most $6720 m^6/n^4$.
\end{theorem}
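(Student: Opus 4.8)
The plan is to take $G$ to be a disjoint union of about $n^2/(2m)$ cliques, each on roughly $2m/n$ vertices, and to draw each clique inside a tiny ball, with the balls in general position in $\R^3$; the point is that once the balls are small enough, no line can meet three of them, so the four pairwise vertex-disjoint edges appearing in any space crossing are forced to lie in at most two of the cliques. Concretely, given $m\le\binom{n}{2}$, I would set $t$ to be about $2m/n$ (rounded up), let $G$ be $\lfloor n/t\rfloor$ disjoint copies of $K_t$ together with one clique on the leftover vertices and a few edges removed so that $\abs{E}=m$ exactly; in the dense range $m=\Omega(n^2)$ a single near-complete $K_{n'}$ with $n'\le n$ works just as well, and when $t<8$ no clique contains four pairwise vertex-disjoint edges, which makes most of the count below vanish. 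I would fix $q_1,\dots,q_k\in\R^3$ in general position, so in particular $\delta:=\min\dist(q_\ell,\ \text{line through }q_i,q_j)>0$ over all triples; fix for each $i$ a straight-line drawing $D_i$ of the $i$-th clique inside the unit ball, the choices arranged so that all $n$ vertices of the final drawing are jointly in general position; and draw the $i$-th clique as $q_i+\veps D_i$ for a small $\veps>0$ to be fixed last, so that it sits inside $B_i:=B(q_i,\veps)$.

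The first step is confinement. A line meeting $B_i$ and $B_j$ must, on the relevant bounded portion, run within $O(\veps)$ of the line through $q_i$ and $q_j$, hence stay at distance $\delta-O(\veps)$ from every other $q_\ell$; once $\veps$ is chosen small relative to $\delta$ and the finitely many implied constants, this exceeds $\veps$, so the line misses every $B_\ell$ with $\ell\notin\{i,j\}$, and no line meets three of the $B_i$. Since each edge of the drawing lies inside a single $B_i$, it follows that in every space crossing the four vertex-disjoint edges met by the transversal line span at most two of the cliques.

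For the count, since the $n$ vertices are in general position, any four pairwise vertex-disjoint edges are four pairwise skew segments, no four of which lie on a common doubly ruled quadric, so they have at most two common transversal lines --- the classical bound for four lines in general position in $\R^3$. Hence $\rcrn_4(G)$ is at most a constant times the number of vertex-disjoint quadruples of edges spanning at most two of the cliques. Each such quadruple is associated with a unique set of one or two cliques, and two copies of $K_t$ span no more vertex-disjoint quadruples of edges than $K_{2t}$ does, namely $105\binom{2t}{8}$, so the count is at most $\binom{k+1}{2}\cdot 105\binom{2t}{8}$. With $\binom{k+1}{2}=O((n/t)^2)$, $\binom{2t}{8}=O(t^8)$ and $t=O(m/n)$, and using $m\le\binom{n}{2}$, this is $O(m^6/n^4)$; the constant $6720$ should drop out after accounting for the rounding of $t$ and of $n/t$, the leftover clique, the small-$t$ cases, and whether quadruples are counted ordered or unordered.

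The hard part will be the hygiene rather than any new idea. One must secure the genericity simultaneously: that the centers $q_i$ can be chosen with $\delta>0$ (no three collinear), and that the intra-cluster point sets can then be chosen so that no four of the $\binom{n}{2}$ segments of the whole drawing lie on a common quadric --- both are open dense conditions, and the latter is precisely where the two-transversal fact gets used. The one genuine computation is the uniform $O(\veps)$ estimate in the confinement step, which is the reason $\veps$ has to be picked only after the $q_i$ and the $D_i$. Finally, the small-$t$ regime (i.e.\ $m=O(n)$) deserves separate treatment, since there the crude bound $O(n^2 t^6)$ overshoots the target, yet the true number of vertex-disjoint quadruples of edges inside two cliques on at most $14$ vertices is a bounded constant, comfortably within budget.
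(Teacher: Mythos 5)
Your construction is correct, but it takes a genuinely different route from the paper's. The paper takes $G$ to be the $D$-th power of a path ($i\sim j$ iff $\abs{i-j}\leq D$, $D=2m/n$), places the vertices along the diagonal of a stretched grid, and uses stair-convexity (Nivasch's approximation lemma) to replace any transversal line by a stair-path; a stair-crossing then forces the four intervals $[s_i,t_i]$ to have at most two connected components, and $6720=105\cdot 2^6$ comes from the $105$ order types of four endpoint-disjoint intervals times the $n^2D^6$ count per type. You instead take a disjoint union of about $n/t$ cliques of size $t\approx 2m/n$, confine each to a ball of radius $\veps$ about generic centers, and use the metric fact that for $\veps$ small enough no line meets three balls; this localizes every space crossing to at most two cliques, and $\binom{k+1}{2}\cdot 105\binom{2t}{8}=O(n^2t^6)=O(m^6/n^4)$ finishes. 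Both arguments are sound. Yours is more elementary and self-contained (no stretched-grid machinery), and your appeal to the classical two-transversal bound for four generic pairwise skew lines is a sensible safety net if one insists on counting (quadruple, line) pairs; the paper, like your main count, counts quadruples, for which that step is unnecessary. What the paper's construction buys is a connected graph that is the direct spatial analogue of the tight planar example, and a technique that transfers to the other crossing numbers it discusses ($\crn_3$, higher dimensions). The one loose end on your side is the explicit constant, which you defer: a crude check (for $m\geq n$ one may take $t\leq 4m/n$, and then $\binom{k+1}{2}\leq 2n^2/t^2$ and $105\binom{2t}{8}\leq \tfrac{2}{3}t^8$ give at most $\tfrac{4}{3}\cdot 4^6\, m^6/n^4\approx 5461\, m^6/n^4$, while for $m<n$ your cliques have at most three vertices and the count is zero), so the stated bound $6720\,m^6/n^4$ is comfortably attainable, but that bookkeeping does need to be carried out since the theorem asserts a specific constant.
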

The construction in the proof of Theorem~\ref{thm_constr} uses the 
idea of  stair-convexity introduced in \cite{bmn_weakeps}. We shall 
briefly review the necessary background before the proof of Theorem~\ref{thm_constr}.

Our final result is the lower bound on the space crossing number 
of (possibly sparse) pseudo-random graphs. 
\begin{theorem}[Proof is on p.~\pageref{pf_thm_expand}]\label{thm_expand}
There is an absolute constant $\veps>0$ such that the following holds.
Let $G=(V,E)$ be a graph such that whenever $V_1,V_2$ are any two subsets of
$V$ of size $\veps\abs{V}$, the number of edges between $V_1$ and $V_2$ 
is at least $N$. Then $\rcrn_4(G)\geq N^4$. 
\end{theorem}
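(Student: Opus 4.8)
The plan is to locate, inside an arbitrary straight-line spatial drawing of $G$, eight large and mutually far-apart clusters of vertices arranged in a rigid geometric pattern which forces \emph{every} way of picking one edge between each of four designated pairs of clusters to be a space crossing; the pseudo-randomness of $G$ then supplies at least $N$ choices for each of the four edges, and multiplying gives $N^4$ space crossings. The constant $\veps$ will be the ``cluster fraction'' produced by the geometric step below.

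Call eight pairwise disjoint compact convex sets $B_1,\dots,B_8\subset\R^3$, together with the perfect matching $\{1,2\},\{3,4\},\{5,6\},\{7,8\}$ on their indices, a \emph{transversal gadget} if for every choice of points $p_i\in B_i$ the four segments $[p_1,p_2]$, $[p_3,p_4]$, $[p_5,p_6]$, $[p_7,p_8]$ admit a common line transversal. Transversal gadgets exist, and in fact can be chosen ``thick''. Take four pairwise skew lines $L_1,\dots,L_4$ in general position possessing a common transversal $\ell_0$ that meets all of them transversely: the common transversals of $L_1,L_2,L_3$ sweep out one ruling of a quadric, and for a generic such quadruple the line $L_4$ meets that ruling transversely at $\ell_0$, so that a common transversal persists under perturbation. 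Pick segments $s_i\subset L_i$ whose relative interiors contain $\ell_0\cap L_i$; by transversality, moving the eight endpoints of $s_1,\dots,s_4$ by less than some absolute fraction of the diameter of $\bigcup_i s_i$ only moves the common transversal slightly, so that taking $B_1,\dots,B_8$ to be small enough balls about the endpoints (small relative to, but a fixed fraction of, their spread) yields a transversal gadget. Finally, since affine maps send lines to lines and segments to segments and preserve incidences, every affine image of a transversal gadget is again one.

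The crux is to show that some transversal gadget can be placed so that each $B_i$ captures a constant fraction of $V$. Regarding $V$ as a point set $P\subset\R^3$ with $n=\abs V$, one uses a selection argument — iterating the Ham--Sandwich theorem, normalizing the shape of $\conv P$ via John's theorem, passing to a lower-dimensional flat and zooming into regions of high density when $\conv P$ is degenerate — to extract a bounded number of pairwise disjoint ``fat'' clusters, in general position in the appropriate flat, each containing $\Omega(n)$ points of $P$; splitting clusters in two if needed we may take exactly eight of them, and one verifies that for a suitable matching the surrounding balls form a transversal gadget (the possible combinatorial shapes — all eight clusters near a common line, or near a common plane, or in genuine three-dimensional position — are handled case by case, using in each case that the relevant thick gadget exists and is affinely universal for that shape). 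This geometric step is where I expect the real work to lie: one must juggle simultaneously the combinatorial richness of the clusters (each of size $\Omega(n)$) and the metric robustness of the configuration they form (the balls must be a fixed fraction of their spread, so that the common transversal survives moving the endpoints inside them). Let $U_1,\dots,U_8$ be the resulting clusters, $\abs{U_i}\ge\veps n$, lying in pairwise disjoint balls $B_1,\dots,B_8$ forming a transversal gadget.

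It remains to count. Shrink each $U_i$ to a subset $U_i'$ of size exactly $\lceil\veps n\rceil$; this is harmless, as $\veps\le 1/8$ and the eight subsets are disjoint. By hypothesis there are at least $N$ edges of $G$ between $U_1'$ and $U_2'$, and likewise between $U_3'$ and $U_4'$, between $U_5'$ and $U_6'$, and between $U_7'$ and $U_8'$. Pick one edge from each of these four edge sets, in any of the at least $N^4$ possible ways. The eight endpoints of such a quadruple lie in the eight pairwise disjoint balls, hence are distinct, so the four chosen edges are pairwise vertex-disjoint; and by the defining property of the transversal gadget there is a line meeting all four. Thus the quadruple together with that line is a space crossing. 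Distinct quadruples yield distinct space crossings: even counting quadruples unordered, each of our four edge sets is precisely the set of edges of $G$ with endpoints in a prescribed pair of clusters, so the labelled quadruple — and hence the space crossing — can be reconstructed from its underlying set of four edges. Therefore the drawing has at least $N^4$ space crossings, and since the drawing was arbitrary, $\rcrn_4(G)\ge N^4$.
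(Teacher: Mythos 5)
The heart of your argument --- that an arbitrary $n$-point set in $\R^3$ admits eight pairwise disjoint balls, each containing $\Omega(n)$ points, forming a ``transversal gadget'' --- is exactly the step you leave unproved, and it is not clear it is even true as stated. Your second paragraph only shows that transversal gadgets exist \emph{somewhere} in $\R^3$; it gives no mechanism for forcing a constant fraction of an adversarially placed point set into each of the eight prescribed balls. The Ham--Sandwich/John's-theorem/case-analysis sketch does not address the real obstruction: four segments in ``generic three-dimensional position'' need not admit any \emph{real} common transversal (four generic skew lines have $0$ or $2$ real transversals), so eight fat clusters in general position, with any of the $105$ matchings, may simply fail to be a gadget. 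Some topological input is needed to guarantee that at least one assignment of clusters to roles yields a configuration where the transversal \emph{must} exist, and with only eight clusters no such result is available.

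The paper resolves precisely this difficulty differently: it applies a same-type lemma for semi-algebraic relations (Lemma~\ref{lem_sametype}) to the relation ``these four segments have a common transversal,'' obtaining \emph{twelve} clusters $V_1,\dotsc,V_{12}$ of size $\veps\abs{V}$ on which the relation is \emph{constant} (possibly constantly false) for every injection $[8]\to[12]$; it then picks one point from each cluster and invokes the theorem of \v{Z}ivaljevi\'c that every straight-line drawing of $K_{6,6}$ (twelve vertices) contains a space crossing, which forces the relation to be constantly \emph{true} for at least one assignment. Your counting in the final paragraph is fine, but without either (a) a proof that eight $\Omega(n)$-clusters forming a gadget always exist, or (b) the same-type-lemma-plus-$K_{6,6}$ device to upgrade ``constant'' to ``constantly true,'' the proof does not go through.
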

The condition of the theorem holds for several models of random graphs,
as well as for $(n,d,\lambda)$-graphs (see for example \cite[Theorem 2.11]{ks_survey}).

\section*{Separation between crossing numbers and space crossing numbers}
To construct graphs with $\crn_4(G)=0$ and unbounded $\crn(G)$, we shall
use the lower bound on crossing numbers due to Riskin. Recall that a $3$-connected
planar graph has  a unique planar drawing \cite[Theorem~4.3.1]{diestel}.
\begin{lemma}[Theorem~4 in \cite{riskin_edge}]
Suppose $e$ is an edge in a graph $G$ such that $H=G\setminus e$ is
a $3$-regular $3$-connected planar graph. Then there is a drawing
of $G$ in the plane with $\crn(G)$ crossings that is obtained from the unique 
planar drawing of $H$ by adding the edge $e$.
\end{lemma}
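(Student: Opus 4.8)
The plan is to show that $\crn(G)=\delta$, where $\delta$ is the dual distance defined below, and to observe that the drawing achieving the upper bound $\crn(G)\le\delta$ already has the required form. Write $\pi_0$ for the unique planar drawing of $H$ (it exists by the uniqueness theorem quoted above), let $H^*$ be the dual multigraph of $\pi_0$, and set
\[
  \delta\defeq\min\bigl\{\,d_{H^*}(f,f')\ :\ f\ni u,\ f'\ni v,\ f,f'\text{ faces of }\pi_0\,\bigr\},
\]
$d_{H^*}$ denoting graph distance in $H^*$. For the upper bound, draw $H$ as $\pi_0$ and choose a shortest path from some $f$ to some $f'$ in $H^*$ realizing $\delta$; by minimality of $\delta$ this path may be taken to use no edge of $H$ incident to $u$ or to $v$. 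A curve from $u$ to $v$ that leaves $u$ into $f$, enters $v$ from $f'$, crosses exactly the $\delta$ primal edges dual to the chosen path (once each), and otherwise stays inside the faces along the path, is a legitimate drawing of $e$. This produces a drawing of $G=H+e$ obtained from $\pi_0$ by adding $e$, with exactly $\delta$ crossings, so $\crn(G)\le\delta$ and it remains to prove $\crn(G)\ge\delta$.

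For the lower bound, I would fix an optimal drawing $D$ of $G$, taken to be a good drawing (no self-crossings, no two adjacent edges crossing, no two edges crossing more than once). Its $k=\crn(G)$ crossings then split as $k=a+b$, where $a$ counts crossings between two edges of $H$ and $b$ counts crossings between $e$ and an edge of $H$. Everything reduces to the following.

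\textbf{Claim.} There is an optimal good drawing of $G$ with $a=0$.

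Given the Claim, $D|_H$ is a crossing-free drawing of the $3$-connected planar graph $H$, hence homeomorphic to $\pi_0$, and $e$ is a curve from $u$ to $v$ crossing edges of $H$ exactly $b=k$ times; the faces of $\pi_0$ that $e$ traverses, listed in order, form a walk of length $k$ in $H^*$ from a face incident to $u$ to a face incident to $v$, and extracting a path yields $k\ge\delta$.

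To prove the Claim I would pick an optimal good drawing minimizing $a$ and suppose $a>0$, seeking a contradiction; this is where the hypotheses that $H$ is $3$-regular and $3$-connected are used. Planarizing $D$ gives an embedded planar graph $P$, and deleting the subdivided copy of $e$ leaves the planarization $P'$ of $D|_H$, in which every original vertex of $H$ (including $u$ and $v$, since $e$ is gone) has degree $3$ and every crossing vertex has degree $4$ and is straight-through. Smoothing the straight-through vertices restores $H$ with some planar embedding, which must equal $\pi_0$ by uniqueness; comparing this with the embedding of $P'$ should reveal, when $a>0$, a reducible configuration among the edges of $H$ — a bigon, or a pair of crossings that can be exchanged — whose swap deletes at least two crossings between edges of $H$. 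The step I expect to be the main obstacle is performing this swap without increasing the number of crossings with $e$: $3$-regularity is the lever here, since the arcs rerouted by the swap only pass through faces of $P'$ bounded by few edges, so $e$ can be displaced out of their way. The other ingredients — the dual-distance reformulation, the upper-bound construction, the reduction to good drawings, and the passage from the Claim to the lower bound — are routine.
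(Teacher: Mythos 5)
This lemma is quoted in the paper as an external result (Theorem~4 of Riskin's paper) and is not proved there, so there is no in-paper argument to compare against; your proposal has to stand on its own. Your framing is the right one: the statement is equivalent to $\crn(G)=\delta$ with $\delta$ the dual distance between the faces at $u$ and the faces at $v$ in the unique embedding $\pi_0$, the upper bound is the routine tunnelling construction, and given your Claim the lower bound $k\ge\delta$ follows by reading off the face walk traversed by $e$. All of that is fine.

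The genuine gap is the Claim itself, which is not a lemma you can defer: it \emph{is} Riskin's theorem. You have reduced the statement to ``an optimal drawing of $G$ exists in which the edges of $H$ are pairwise non-crossing,'' and your argument for this is a description of a hoped-for argument, not a proof. Two concrete problems. First, it is not true in general that a non-planar good drawing of a planar graph contains a local ``reducible configuration'' (an empty bigon or a swappable pair of crossings) whose removal is a local move: a good drawing of $K_4$ with one crossing, for instance, can only be repaired by rerouting an entire edge, and rerouting an entire edge of $H$ changes its crossings with $e$ (and with other edges of $H$) in an uncontrolled way. Second, even when a local swap among $H$-edges does exist, you acknowledge yourself that you cannot yet bound the resulting change in the number of crossings with $e$; the curve $e$ may thread through the region affected by the swap many times, and ``displacing $e$ out of the way'' can create new crossings of $e$ with other edges of $H$. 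Since the hypotheses of $3$-regularity and $3$-connectivity are exactly what must be exploited here (the statement is false for general $3$-connected planar $H$, where an optimal drawing of $H+e$ may draw $H$ non-planarly), and your sketch invokes them only as ``the lever,'' the crucial step is missing. To complete this you would need either Riskin's actual argument or a worked-out induction on the number of $H$--$H$ crossings with an explicit accounting of how the crossings on $e$ change.
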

\proofc[Proof of Proposition~\ref{prop_incomparability}]\label{pf_prop_incomparability}
Let $H$ be the truncated $n$-by-$n$ hexagonal grid
drawn 
\begin{wrapfigure}{r}[1.5cm]{0cm}
\vspace{-2ex}
\begin{tikzpicture}[scale=0.5]
\def\gridsize{2}
\def\gridsizep1{3} 
\def\gridsizem2{0} 
\foreach \y in {-\gridsize,...,\gridsize}
 \foreach \x in {-\gridsize,...,\gridsize}
   {
    \ifthenelse{\NOT\(\equal{\x}{-\gridsize} \AND \equal{\y}{\gridsize}\) 
           \AND \NOT\(\equal{\x}{\gridsize} \AND \equal{\y}{-\gridsize}\)  }
    {
     \draw ($\x*(1.5,{0.5*sqrt(3)})+\y*(-1.5,{0.5*sqrt(3)})$)
       +(0:1)  -- +(60:1) -- +(120:1) -- +(180:1) -- +(240:1) -- +(300:1) -- cycle;
     \filldraw ($\x*(1.5,{0.5*sqrt(3)})+\y*(-1.5,{0.5*sqrt(3)})$)
       +(0:1) circle (2.3pt)
       +(60:1) circle (2.3pt)
       +(120:1) circle (2.3pt)
       +(180:1) circle (2.3pt)
       +(240:1) circle (2.3pt) 
       +(300:1) circle (2.3pt);
    }
    {
     \ifthenelse{\equal{\x}{-\gridsize} \AND \equal{\y}{\gridsize}}
     {
      \draw ($\x*(1.5,{0.5*sqrt(3)})+\y*(-1.5,{0.5*sqrt(3)})$)
         +(-60:1) -- +(60:1);
     }
     {
      \draw ($\x*(1.5,{0.5*sqrt(3)})+\y*(-1.5,{0.5*sqrt(3)})$)
         +(120:1) -- +(240:1);
     }
    } 
   }
\foreach \y in {-\gridsizem2,...,\gridsize}
{
 \draw (${\gridsize+1}*(1.5,{0.5*sqrt(3)})+\y*(-1.5,{0.5*sqrt(3)})$) 
    +(180:1) -- +(300:1);
}
\foreach \y in {-\gridsize,...,\gridsizem2}
{
 \draw (${-\gridsize-1}*(1.5,{0.5*sqrt(3)})+\y*(-1.5,{0.5*sqrt(3)})$) 
    +(0:1) -- +(120:1);
}
\foreach \x in {-\gridsizem2,...,\gridsize}
{
 \draw ($\x*(1.5,{0.5*sqrt(3)})+{\gridsize+1}*(-1.5,{0.5*sqrt(3)})$) 
    +(0:1) -- +(-120:1);
}
\foreach \x in {-\gridsize,...,\gridsizem2}
{
 \draw ($\x*(1.5,{0.5*sqrt(3)})+{-\gridsize-1}*(-1.5,{0.5*sqrt(3)})$) 
    +(60:1) -- +(180:1);
}
\fill ($-1*(1.5,{0.5*sqrt(3)})+0*(-1.5,{0.5*sqrt(3)})$) ++(0:-1) circle (5pt) +(0:0.45) node {$u$}; 
\fill ($1*(1.5,{0.5*sqrt(3)})+0*(-1.5,{0.5*sqrt(3)})$) ++(0:1) circle (5pt) +(-60:0.45) node {$v$}; 
\end{tikzpicture}\vspace{-2ex}
\end{wrapfigure}
as in the picture on 
the right. The graph $H$ is clearly $3$-connected $3$-regular planar graph. 
Pick two vertices $u,v\in H$ that are separated from
one another by at least $n/4$ faces (the outer region is 
also a face). Then by the preceding lemma
the graph $G=H\cup \{uv\}$ has crossing number at least $n/4$.
On the other hand, there is a spatial drawing of $G$ without any
spatial crossings: Let $H$ be drawn on the surface of the sphere without
crossings, and represent the edge $uv$ by a straight-line segment.
Since every line meets the sphere in at most two vertex-disjoint edges,
there are indeed no space crossings. \qed

\section*{Lower bounds on the space crossing number}\label{pf_thm_space_crossing_lemma}
The naive strategy to prove Theorem~\ref{thm_space_crossing_lemma} is to show
that a graph without space crossing can have only $O(\abs{V})$ edges, derive from
this a lower bound on the space crossing number of the form $c_1 \abs{E}-c_2 \abs{V}$,
and then use random sampling to ``boost'' this to a stronger bound on $\crn_4$.
Whereas, it is true that a space-crossing-free graph has only $O(\abs{V})$
edges (it follows from \cite[Corollary~3.5]{zivaljevic_k6} that a graph with a 
$K_{6,6}$-minor has a space crossing),
this approach yields only $\crn_4(G)\geq c \abs{E}^7/\abs{V}^6$. The reason is that 
to get  $\crn_4\geq c \abs{E}^6/\abs{V}^4$ one needs to boost a stronger inequality 
$\crn_4(G)\geq c_1 \abs{E}^2 - c_2 \abs{V}^2$. 
To obtain such an inequality we shall break the graph $G$ into many 
small pieces, so that for each pair of pieces
there is a space crossing that involves two edges from each piece.
For that we need several known results, which we now state.

Recall that a subdivision of a graph $G$ is a graph obtained from $G$ by subdividing
each edge of $G$ into paths \cite[p.~20]{diestel}.
\begin{lemma}[\cite{kostochka_pyber}]\label{lem_kostochka_pyber}
Let $\veps>0$ be arbitrary. Then every  
graph $G=(V,E)$ with $4^{t^2}\abs{V}^{1+\veps}$ edges contains a subdivision
of $K_t$ on at most $7t^2\log t/\veps$ vertices.
\end{lemma}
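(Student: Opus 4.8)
\medskip
\noindent\emph{Proof plan.}
I would produce a subdivision of $K_t$ in the usual shape: $t$ \emph{branch vertices} together with $\binom{t}{2}$ pairwise internally disjoint paths joining them in pairs, arranged so that every path is short and the total number of vertices used stays below $7t^2\log t/\veps$. The first move is to pass to a subgraph of large minimum degree: deleting, one at a time, any vertex whose current degree is below the current edge-to-vertex ratio yields a subgraph $G'$ in which every degree is at least $\abs{E}/\abs{V}\ge 4^{t^2}\abs{V}^{\veps}\ge 4^{t^2}\abs{V(G')}^{\veps}$ (discarding vertices only makes the bound easier). Relabelling $G'$ as $G$, we may assume $G$ has $n$ vertices and minimum degree at least $d:=4^{t^2}n^{\veps}$; in particular $n\le d^{1/\veps}$, so any set of fewer than $7t^2\log t/\veps$ vertices that we shall ever delete is a negligible fraction of any neighbourhood.

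The core is a dichotomy applied repeatedly to a shrinking working graph. In a graph of minimum degree $\ge d'$ on at most $(d')^{1/\veps'}$ vertices there are two possibilities. Either the breadth-first balls around any vertex — even after deleting a set $F$ with $\abs{F}<d'/2$ — grow by a large multiplicative factor at each step (this is what happens when the neighbourhoods of adjacent vertices overlap little), in which case the graph has diameter $O(\log t/\veps')$ after deleting $F$, and any two high-degree vertices are joined by a path of that length avoiding $F$. Or some ball fails to grow, which exposes a pair of vertices with large common neighbourhood and hence a smaller subgraph whose edge count, relative to its smaller order, still clears a bound of the form $4^{t^2}(\#\text{vertices})^{1+\veps''}$ with $\veps''>\veps'$; one then recurses on this subgraph. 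Since the order strictly decreases and $\veps''$ stays bounded, the recursion terminates — in the end either at a graph with the small-diameter property, or at one so dense that it contains $K_t$, hence $TK_t$, on at most $t^2$ vertices outright.

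Granting the small-diameter alternative, the subdivision is assembled greedily: pick any $t$ vertices as branch vertices (each automatically has degree $\ge d$) and process the $\binom{t}{2}$ pairs in an arbitrary order, each time routing a shortest path between the current pair in the graph obtained by deleting the internal vertices used so far together with the other $t-2$ branch vertices. Each path adds $O(\log t/\veps)$ internal vertices, so the deleted set never exceeds $t+\binom{t}{2}\cdot O(\log t/\veps)\le 7t^2\log t/\veps<d/2$, the dichotomy keeps applying, and the process ends either with a completed $TK_t$ on the required number of vertices or inside the dense alternative, where we are already done.

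The step I expect to fight with is making the dichotomy quantitatively sharp with these exact constants: one must pin down how much the hypothesis degrades when passing from a graph to one of its sub-balls — how much is lost from the factor $4^{t^2}$ and how much is gained in the exponent $\veps$ — so that every recursive instance still satisfies the hypothesis of the lemma, and one must check that the diameter produced by the fast-growth regime is genuinely $O(\log t/\veps)$ with a small enough implied constant to land the final vertex count at $7t^2\log t/\veps$. Verifying the small cases (say $t$ bounded, or $n$ close to $4^{t^2}$) and chasing the arithmetic of $4^{t^2}$, $\veps$ and $7t^2\log t/\veps$ is where the work concentrates; the topological routing itself is routine.
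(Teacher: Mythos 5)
This lemma is not proved in the paper at all---it is quoted verbatim from Kostochka and Pyber \cite{kostochka_pyber}---so the only meaningful comparison is with that original argument. Your plan does identify its correct overall shape: pass to a subgraph of minimum degree at least $\abs{E}/\abs{V}\ge 4^{t^2}\abs{V}^{\veps}$, then run a dichotomy in which either neighbourhoods expand (giving logarithmic diameter even after deleting a small set $F$, so that the $\binom{t}{2}$ paths of a $K_t$-subdivision can be routed greedily) or the graph densifies locally (allowing recursion with an improved exponent). That is genuinely the skeleton of the known proofs.

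But as written the proposal has a real gap: the dichotomy, which is the entire content of the theorem, is asserted rather than proved, and the mechanism you give for the dense alternative is not right. From ``some ball fails to grow'' you conclude that there is ``a pair of vertices with large common neighbourhood and \emph{hence} a smaller subgraph whose edge count \dots still clears a bound of the form $4^{t^2}(\#\text{vertices})^{1+\veps''}$''---but two vertices with many common neighbours say nothing about the number of edges \emph{inside} that common neighbourhood, so the ``hence'' does not follow. What the non-expansion case actually yields (in Kostochka--Pyber, and in the later Koml\'os--Szemer\'edi refinement) is a set $B$ with $\abs{N(B)}$ small relative to $\abs{B}$, so that the edges incident to $B$ are crammed into the few vertices of $B\cup N(B)$, producing a subgraph that is denser \emph{relative to its order}; making this quantitative, showing that the loss in the $4^{t^2}$ factor is absorbed by the gain in the exponent, and verifying that the recursion terminates within the budget $7t^2\log t/\veps$ is precisely the proof, and you explicitly defer all of it. So the proposal is a correct road map to the literature's argument, not a proof; the step you flag as the one you ``expect to fight with'' is not a technicality but the theorem itself.
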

\begin{corollary}\label{cor_manydisjoint}
Let $C\geq 3$. Suppose $G=(V,E)$ is a graph with at least $\abs{E}\geq C 4^{t^2} \abs{V}$ 
edges. Then $G$ contains at least $\abs{E}/(16 t^2 \log t\log_{C/2} \abs{V})$ edge-disjoint subdivisions of $K_t$.
\end{corollary}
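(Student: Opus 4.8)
The plan is to extract subdivisions of $K_t$ one at a time, greedily: find one copy by Lemma~\ref{lem_kostochka_pyber}, delete all of its edges, and repeat, continuing for as long as the remaining graph still has enough edges for the lemma to apply. The only real design choice is the value of $\veps$ fed into Lemma~\ref{lem_kostochka_pyber}; I would take $\veps\defeq 1/\log_{C/2}\abs{V}$. With this choice, since $\abs{V}^{1/\log_{C/2}\abs{V}}=C/2$, the edge threshold of the lemma becomes
\[
4^{t^2}\abs{V}^{1+\veps}=\bigl(C/2\bigr)\,4^{t^2}\abs{V}.
\]
By hypothesis $\abs{E}\geq C4^{t^2}\abs{V}$, so after we have deleted at most $\abs{E}/2$ edges the remaining graph still has at least $(C/2)4^{t^2}\abs{V}$ edges; and since deleting edges never creates new vertices, its vertex count is at most $\abs{V}$, so the lemma still applies and yields another subdivision of $K_t$ on at most $7t^2\log t/\veps=7t^2\log t\log_{C/2}\abs{V}$ vertices.

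Next I would count edges. A subdivision of $K_t$ on $s$ vertices has exactly $\binom{t}{2}+(s-t)$ edges, since one builds it from $K_t$ by repeatedly adding a degree-$2$ vertex, each step adding one vertex and one edge. Hence each extracted copy has at most $7t^2\log t\log_{C/2}\abs{V}+\binom{t}{2}\leq 8t^2\log t\log_{C/2}\abs{V}$ edges; the last inequality is a routine estimate using $\log_{C/2}\abs{V}\geq 1$, which is valid because the hypothesis forces $\abs{V}>2C4^{t^2}$. Therefore the greedy process can be run at least $(\abs{E}/2)\big/(8t^2\log t\log_{C/2}\abs{V})=\abs{E}/(16t^2\log t\log_{C/2}\abs{V})$ times before the total number of deleted edges reaches $\abs{E}/2$, and the copies produced are pairwise edge-disjoint by construction; this is exactly the claimed bound.

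I do not expect a genuine obstacle here: the argument is the standard ``delete and repeat'' reduction. The two points that need a little care are choosing $\veps$ so that the lemma's threshold matches the edge budget $\abs{E}-(C/2)4^{t^2}\abs{V}\geq\abs{E}/2$ that the hypothesis provides, and checking that the per-copy edge bound collapses to the clean constant $16$; both become routine once the value $\veps=1/\log_{C/2}\abs{V}$ is in hand.
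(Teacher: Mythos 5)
Your proposal is correct and matches the paper's argument essentially step for step: the same choice $\veps=1/\log_{C/2}\abs{V}$ in Lemma~\ref{lem_kostochka_pyber}, the same greedy deletion of edge-disjoint copies while at least $(C/2)4^{t^2}\abs{V}$ edges remain, and the same per-copy edge bound of $8t^2\log t\log_{C/2}\abs{V}$. No issues.
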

\begin{proof}
Define a nested sequence of graphs $G=G_0\supset G_1\supset G_2\supset \dotsb\supset G_s$ on the vertex 
set $V$ as follows. As long as $E(G_i)\geq (C/2) 4^{t^2} \abs{V}$ it follows by Lemma~\ref{lem_kostochka_pyber}
with $\veps=1/\log_{C/2} \abs{V}$ that $G_i$ contains a subgraph $H_i$, which is a subdivision
of $K_t$ with $\abs{V(H_i)}\leq 7t^2\log t \log_{C/2} \abs{V}$. Let $G_{i+1}$ be the result
of removing the edges of $H_i$ from $G_i$. The sequence terminates 
once the number of edges in the graph falls below $(C/2) 4^{t^2} \abs{V}$.
As 
\begin{equation*}
\abs{E(G_i)}-\abs{E(G_{i+1})}=\abs{E(H_i)}
\leq \abs{E(K_t)}+\abs{V(H_i)}\leq 8t^2 \log t \log_{C/2} \abs{V},
\end{equation*}
the number of terms in the sequence is at least $(\abs{E}/2)/(8t^2 \log t \log_{C/2} \abs{V})$.
Since the graphs $\{H_i\}$ are edge-disjoint subgraphs of $G$, the corollary follows.
\end{proof}
The next is a version of \cite[Theorem~3]{bs_judicious}.
\begin{lemma}\label{lem_partitiongraph}
The vertex set of every graph $G=(V,E)$ can be partitioned into two 
classes $V=V_1\cup V_2$ so that the number of edges in each of the 
induced subgraphs $G_i=G|_{V_i}$ is at least
$\abs{E}/4-\sqrt{\abs{V}\abs{E}}$.
\end{lemma}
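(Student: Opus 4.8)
The plan is to run a short second-moment argument on a uniformly random bipartition. First, place each vertex of $V$ independently into $V_1$ or $V_2$ with probability $\tfrac12$, and set $X_i=\abs{E(G_i)}$. A fixed edge lies inside a prescribed class with probability $\tfrac14$, so $\E[X_1]=\E[X_2]=\abs{E}/4$ by linearity of expectation. It then suffices to show that with positive probability neither $X_1$ nor $X_2$ drops below its mean by as much as $\sqrt{\abs{V}\abs{E}}$; any outcome in this event furnishes the desired partition. (If $\abs{E}/4\le\sqrt{\abs{V}\abs{E}}$ there is nothing to prove, since $V_1=V$, $V_2=\emptyset$ already works, so we may assume the reverse inequality.)

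The next step is to bound $\mathrm{Var}(X_1)$, and by symmetry $\mathrm{Var}(X_2)$. Expanding $X_1=\sum_{e\in E}\mathbf{1}[e\subseteq V_1]$, the covariance of the indicators of two edges is $0$ when they are vertex-disjoint, equals $\tfrac18-\tfrac1{16}=\tfrac1{16}$ when they share exactly one vertex, and equals $\tfrac14-\tfrac1{16}=\tfrac3{16}$ for an edge with itself. Since $G$ is simple, distinct edges meet in at most one vertex, and the number of ordered pairs of distinct edges through a common vertex is $\sum_{v\in V}d(v)\bigl(d(v)-1\bigr)$. Using $d(v)\le\abs{V}-1$ and $\sum_v d(v)=2\abs{E}$ this gives
\[
\mathrm{Var}(X_1)=\tfrac{3}{16}\abs{E}+\tfrac{1}{16}\sum_{v}d(v)(d(v)-1)\le\tfrac{1}{16}\bigl(3\abs{E}+2(\abs{V}-2)\abs{E}\bigr)\le\tfrac18\abs{V}\abs{E}.
\]

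Finally, Chebyshev's inequality yields $\Pr\bigl[X_i<\abs{E}/4-\sqrt{\abs{V}\abs{E}}\bigr]\le\mathrm{Var}(X_i)/(\abs{V}\abs{E})\le\tfrac18$ for each $i$, so a union bound shows the probability that $X_1$ or $X_2$ falls below $\abs{E}/4-\sqrt{\abs{V}\abs{E}}$ is at most $\tfrac14<1$. Hence some bipartition has $\abs{E(G_i)}\ge\abs{E}/4-\sqrt{\abs{V}\abs{E}}$ for both $i$, proving the lemma.

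I do not anticipate a genuine obstacle here. The one point needing care is that the variance can be inflated by vertices of large degree, and it is precisely this that forces the $\sqrt{\abs{V}\abs{E}}$ loss rather than the $\sqrt{\abs{E}}$ loss available for graphs of bounded maximum degree; the crude estimate $d(v)\le\abs{V}-1$ suffices because the lemma asks only for the weaker bound. A deterministic alternative — start from a balanced partition and repeatedly move a vertex to the side where it has fewer neighbours whenever this increases $\min\bigl(\abs{E(G_1)},\abs{E(G_2)}\bigr)$ — also works, but the probabilistic argument is shorter and amply strong for the partition-into-pieces construction to come.
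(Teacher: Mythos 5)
Your proof is correct and follows essentially the same route as the paper: a uniformly random bipartition, a second-moment bound on $\abs{E(G_i)}$, Chebyshev's inequality, and a union bound. The only cosmetic difference is that you organize the computation as a covariance expansion (obtaining the slightly sharper constant $\tfrac18\abs{V}\abs{E}$) where the paper bounds $\E[X_i^2]$ directly.
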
 
\begin{proof}
For each vertex $v$ place $v$ into $V_1$ or $V_2$ with equal probability independently
of the other vertices. Let $X_i=\abs{E(G_i)}$. Then $\E[X_i]=\tfrac{1}{4}\abs{E}$ and
\begin{align*}
\E[X_i^2]&=\sum_{e_1,e_2\in E} \Pr[e_1\in E_i\wedge e_2\in E_i]\\&=
\tfrac{1}{4}\abs{E}+\tfrac{1}{8}\abs{\{(e_1,e_2)\in E^2: \abs{e_1\cap e_2}=1\}}+
\tfrac{1}{16}\abs{\{(e_1,e_2)\in E^2 : e_1\cap e_2=\emptyset\}}\\
&\leq \abs{E}^2/16+\tfrac{1}{4}\sum_{v\in V} \deg(v)^2\leq \abs{E}^2/16+\tfrac{1}{4}\abs{V}\abs{E}.
\end{align*}
Hence $\E\bigl[(X_i-\abs{E}/4)^2\bigr]\leq \tfrac{1}{4}\abs{V}\abs{E}$, implying
$\Pr\bigl[\abs{X_i-\abs{E}/4}>\sqrt{\abs{V}\abs{E}}\bigr]<1/4$. Therefore, the conclusion
of the lemma holds for the random partition with probability at least $1/2$.
\end{proof}

To find lines through four edges, we use two results from knot theory. 
We first recall the standard definitions. Two continuous injective 
maps $f_1,f_2\colon \Sph^1\to \R^2$ whose images are disjoint define 
a (two-component) \emph{link} in $\R^3$. The 
sets $C_1=f_1(\Sph^1)$ and $C_2=f_2(\Sph^1)$ are a pair of continuous closed 
curves (knots) in $\R^3$. The linking number $\lk(C_1,C_2)$ of the two curves 
is the degree\footnote{Implicit in the definition of 
the degree is the group of the coefficients for the homology.
We use $\Z$ coefficients throughout the paper.} of the Gauss map 
\begin{equation}\label{eq_gaussmap}
\begin{aligned}
g&\colon \Sph^1\times \Sph^1\to \Sph^2\\ 
g&\colon (x,y)\mapsto \frac{f_1(x)-f_2(y)}{\norm{f_1(x)-f_2(y)}}.
\end{aligned}
\end{equation}
The linking number is an invariant of the knots, and if the 
functions $f_1,f_2$ are sufficiently nice, then it can also 
be defined by counting the number of signed crossings 
between $C_1$ and $C_2$ in a projection to a generic plane.
\begin{lemma}[Theorem 1 in \cite{conway_gordon} and independently in \cite{sachs}]\label{lem_conway_gordon}
In every spatial drawing of $K_6$ there is a pair of vertex-disjoint triangles
whose linking number is odd.
\end{lemma}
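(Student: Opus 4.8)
The plan is to carry out the classical parity argument of Conway--Gordon and Sachs. First I would reduce to the case that the spatial drawing is an \emph{embedding} of $K_6$ in $\R^3$: generic curves in $\R^3$ are disjoint, and a $C^1$-small perturbation of a spatial drawing changes neither which triangles are vertex-disjoint nor any linking number (the linking number being a locally constant isotopy invariant). So we may assume that every one of the $\binom{6}{3}/2 = 10$ pairs of vertex-disjoint triangles is a genuine two-component link with a well-defined integer linking number. Let $\Sigma = \sum_{\{\gamma,\gamma'\}} \lk(\gamma,\gamma')$ be the sum over these ten pairs. It suffices to show $\Sigma$ is odd, since then some summand is odd.

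Second, I would show that $\Sigma \bmod 2$ is independent of the embedding. Given two embeddings of $K_6$, connect them by a generic one-parameter family of spatial drawings, the six vertices moving along generic arcs; by standard transversality the family avoids all degeneracies of codimension at least two (coincident vertices, a vertex on an edge, triple points), so the drawing stays an embedding except at finitely many times, each a transversal double point between the interiors of two edges $e$ and $f$. If $e$ and $f$ are adjacent (or equal), this ``crossing change'' alters none of the ten linking numbers: a self-crossing change of a triangle $\gamma$ replaces it by a curve homologous to $\gamma$ in the complement of any other triangle, and two adjacent edges can never lie one in $\gamma$ and one in $\gamma'$ when $\gamma,\gamma'$ are vertex-disjoint. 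If $e = v_1v_2$ and $f = v_3v_4$ are vertex-disjoint, with $\{v_5,v_6\}$ the remaining two vertices, then the only affected pairs are those in which one triangle contains $e$ and the other contains $f$, and there are exactly two of them, $\{v_1v_2v_5,\,v_3v_4v_6\}$ and $\{v_1v_2v_6,\,v_3v_4v_5\}$; each of their linking numbers changes by $\pm1$. Hence $\Sigma$ changes by an even number across every crossing change, and $\Sigma\bmod 2$ is an invariant of $K_6$.

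Third, I would evaluate $\Sigma$ on one explicit embedding. Place the vertices on the moment curve, $p_i = (t_i, t_i^2, t_i^3)$ with $t_1 < \dots < t_6$, and draw each edge as a segment. A routine Gale/oriented-matroid computation shows that exactly one of the ten pairs of disjoint triangles, namely $\{p_1p_3p_5,\, p_2p_4p_6\}$, has intersecting convex hulls, while the other nine pairs are separated by hyperplanes and hence unlinked; and for that one pair the linking number is $\pm1$. Thus $\Sigma = 1$ for this drawing, so by the previous paragraph $\Sigma$ is odd for every spatial drawing of $K_6$, which is the lemma.

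The step I expect to be the crux is the second one: it rests on knowing that any two spatial embeddings of a graph are joined by a path that is generically an embedding and meets the discriminant only in transversal edge--edge double points, and on checking that self-crossings and adjacent-edge crossings do nothing --- so that the combinatorial count ``exactly two affected pairs'' is precisely what makes $\Sigma$ invariant modulo $2$. The first step is a routine transversality argument and the third is a finite computation.
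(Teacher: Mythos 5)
The paper offers no proof of this lemma to compare against --- it is quoted directly from Conway--Gordon and Sachs --- so I can only assess your argument on its own terms. Your outline is the classical Conway--Gordon proof: show that the sum $\Sigma$ of the ten linking numbers is invariant modulo $2$ under crossing changes, then evaluate it on one embedding. Steps one and two are sound. In particular, the combinatorial count is right: for a crossing change between vertex-disjoint edges $v_1v_2$ and $v_3v_4$, the affected pairs are exactly $\{v_1v_2v_5,\,v_3v_4v_6\}$ and $\{v_1v_2v_6,\,v_3v_4v_5\}$, and self-crossings and adjacent-edge crossings leave every $\lk(\gamma,\gamma')$ unchanged, so $\Sigma\bmod 2$ is indeed an invariant.

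The gap is in the base case. It is \emph{not} true that nine of the ten pairs on the moment curve have convex hulls separated by hyperplanes. A hyperplane meets the moment curve in at most $3$ points, so $\conv\{p_i: i\in A\}$ and $\conv\{p_i: i\in B\}$ can be separated if and only if the two-coloring of $t_1<\dots<t_6$ induced by the partition $\{A,B\}$ has at most $3$ alternations. Besides the alternating partition $\{1,3,5\},\{2,4,6\}$ (five alternations), the partitions $\{1,3,6\},\{2,4,5\}$ and $\{1,4,6\},\{2,3,5\}$ each have four alternations, so those three pairs all have intersecting convex hulls. Since intersecting hulls does not imply linked boundaries, your separation argument disposes of only seven pairs and leaves three to examine, not one. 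The conclusion $\Sigma=1$ is still correct, but it must be computed: taking $t_i=i$, for $\{1,3,6\}$ vs.\ $\{2,4,5\}$ and for $\{1,4,6\}$ vs.\ $\{2,3,5\}$ both points where the first boundary crosses the plane of the second triangle land outside that triangle, giving $\lk=0$, while for $\{1,3,5\}$ vs.\ $\{2,4,6\}$ exactly one of the two crossing points lands inside, giving $\lk=\pm1$. With that repair (or with Conway--Gordon's original diagrammatic base case) the proof is complete.
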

\begin{lemma}\label{lem_linking}
If $C_1,C_2,C_3,C_4\subset \R^3$ are four disjoint continuous closed curves, and
$\lk(C_1,C_2)$ and $\lk(C_3,C_4)$ are non-zero, then there is at least
one line that intersects all the four curves.
\end{lemma}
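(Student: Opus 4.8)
The plan is to argue by contradiction: assume no line meets all four curves, and show that this forces one of the two linking numbers to vanish. The key object is the space of lines meeting both $C_1$ and $C_2$; the hypothesis $\lk(C_1,C_2)\neq 0$ should be used to produce a ``sheet'' of such lines which is forced, by a topological degree argument, to sweep out all of $\R^3$, and in particular to hit $C_3$ and $C_4$. To make this precise I would first reparametrize: for a point $x\in C_1$ and $y\in C_2$ consider the line $\ell(x,y)$ through them, and the unit direction $g(x,y)=(f_1(x)-f_2(y))/\norm{f_1(x)-f_2(y)}$ as in~\eqref{eq_gaussmap}. The Gauss map $g\colon \Sph^1\times\Sph^1\to\Sph^2$ has degree $\lk(C_1,C_2)\neq 0$, hence is \emph{surjective}. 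So every direction in $\Sph^2$ is realized by some secant line of the pair $(C_1,C_2)$.

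Next I would look at the projection $\pi_u$ of $\R^3$ onto a plane orthogonal to a generic direction $u\in\Sph^2$. A secant line of $(C_1,C_2)$ in direction $u$ projects to a crossing point of the closed curves $\pi_u(C_1)$ and $\pi_u(C_2)$ in that plane; since $\lk\big(C_1,C_2\big)\neq 0$, these two projected curves must cross, and in fact one lies partly ``inside'' the other in the sense that $\pi_u(C_1)$ is not contained in the unbounded component of the complement of $\pi_u(C_2)$. The cleanest way I know to exploit this uniformly over all $u$ is: the union $L_{12}=\bigcup\{\ell(x,y):x\in C_1,\ y\in C_2\}$ of all secant lines is a closed set whose complement, if nonempty, is an open set each of whose components is either bounded or ``sees infinity'' only in a restricted cone of directions; but surjectivity of $g$ kills that possibility, giving $L_{12}=\R^3$. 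Actually the argument I would push hardest is the following packaging of Lemma~\ref{lem_conway_gordon}-style reasoning: define $F\colon \Sph^1\times\Sph^1\times(0,\infty)\times(0,\infty)\to\R^3$ sending $(x,y,s,t)$ to a point determined by the two secant lines $\ell(x_1,y_1)$ of $(C_1,C_2)$ and $\ell(x_2,y_2)$ of $(C_3,C_4)$ — roughly, the ``transversal'' configuration — and compute a nonzero degree from $\lk(C_1,C_2)\lk(C_3,C_4)\neq 0$, so that $F$ is onto and in particular the two secant lines are forced to coincide for some choice of parameters, which is exactly a common transversal.

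The main obstacle, and where I expect to spend the real effort, is compactness/properness: secant lines escape to infinity as the two points on a curve come together (the line $\ell(x,y)$ degenerates to a tangent line but the \emph{point} data stays bounded, so that is fine), and more seriously the putative common-transversal map is only defined on an open part of the parameter space, so I must either (i) compactify by adding tangent lines and lines ``at infinity'' and check the degree computation extends, or (ii) do a limiting argument: perturb $C_1,\dots,C_4$ to be smooth and in general position, prove the statement there by a clean transversality-plus-degree count, and then take limits, using that ``having a common transversal'' is a closed condition on quadruples of compact disjoint curves. I would go with route (ii). Concretely: approximate each $C_i$ by a polygonal (or smooth) curve $C_i^{(n)}\to C_i$ with the linking numbers preserved for large $n$ (linking number is locally constant under $C^0$-small perturbations of disjoint curves); for the smooth case, consider the $3$-manifold of secant lines of $(C_1,C_2)$, show via the nonvanishing linking number that its image under ``project along the line'' covers $\Sph^2$ with nonzero degree, intersect with the analogous manifold for $(C_3,C_4)$, and get a common transversal by a dimension count $3+3-1=5>$ (wait: the space of lines in $\R^3$ is $4$-dimensional, so two $3$-manifolds of lines generically meet in dimension $3+3-4=2$, hence nonempty once a suitable algebraic intersection number is shown nonzero) — the intersection number is the one forced by $\lk(C_1,C_2)\lk(C_3,C_4)\neq 0$. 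Finally pass to the limit: a convergent sequence of common transversals $\ell_n$ of the $C_i^{(n)}$ has a subsequential limit line $\ell$ (using that all these lines meet a fixed large ball, since they meet the $C_i^{(n)}$ which lie in a fixed ball), and $\ell$ meets each $C_i$ by continuity. This yields the lemma.
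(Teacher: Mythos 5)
Your final strategy --- perturb to smooth curves in general position, produce a common transversal there by an intersection-number count in the space of lines, and pass to the limit --- is precisely the route the paper acknowledges (``derive Lemma~\ref{lem_linking} from the result in \cite{viro} by a limiting argument'') and deliberately does not take. The paper's own proof instead maps both tori $\Sph^1\times\Sph^1$ directly into the space of directed lines $\TSph^2$ via $h_{12},h_{34}$, compactifies by passing to the Thom space, and shows the intersection product of the two resulting $2$-cycles is nonzero. Your closing limiting argument is fine (transversals of curves in a fixed ball form a compact family), and your first two paragraphs can be discarded: covering $\R^3$ by secants of $(C_1,C_2)$ does not produce a single line meeting $C_3$ and $C_4$ simultaneously, and the map $F$ into $\R^3$ from a $4$-dimensional domain has no ``degree'' in the sense you invoke.

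The genuine gap is in the core count. First, a bookkeeping error: the transversals of a disjoint pair $(C_1,C_2)$ are parametrized by $\Sph^1\times\Sph^1$ (a point on each curve determines the line), so they form a $2$-parameter family, not a $3$-manifold; the correct count in the $4$-dimensional space of lines is $2+2-4=0$, matching Viro's finite count. Second, and more seriously, you assert that ``the intersection number is the one forced by $\lk(C_1,C_2)\lk(C_3,C_4)\neq 0$'' without computing it. The class of the first cycle in $H_2$ of the space of directed lines ($\TSph^2$, which retracts to $\Sph^2$) is $\lk(C_1,C_2)$ times the generator, so the intersection number is $\lk(C_1,C_2)\lk(C_3,C_4)$ times the \emph{self-intersection of that generator}, and the entire lemma hinges on that universal constant being nonzero. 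It equals the Euler number $\chi(\Sph^2)=2$ (in the paper, this is the step $\tau^2\cap[\TSph^2]=i_*(\chi\cap[\Sph^2])$); had the bundle of lines over directions been trivial, the constant would be $0$ and the argument would prove nothing. Relatedly, to make ``intersection number'' of compact cycles meaningful and computable in this non-compact $4$-manifold one needs exactly the compactification you wave at in option (i) --- this is what the Thom class is doing in the paper. Until you supply the self-intersection computation (or a genuine transversality proof of Viro's theorem to cite), the proof is incomplete at its central step.
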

This lemma is similar to Corollary~1 of Theorem~2 in \cite{viro}. That corollary
asserts that if the four curves are in addition smooth, and satisfy an
appropriate general position requirement, then the number of lines through
all four of them is at least $\abs{\lk(C_1,C_2)\lk(C_3,C_4)}$. 
It is possible to derive Lemma~\ref{lem_linking} from the result in \cite{viro},
by a limiting argument. For completeness we include a short 
proof of Lemma~\ref{lem_linking}, which uses a different idea.

\begin{proof}[Proof of Lemma~\ref{lem_linking}]
Let $\TSph^2$ be the tangent bundle to $\Sph^2$. An element 
$(p,v)\in \TSph^2$ consists of a point $p\in \Sph^2$ and a tangent
vector $v$ to $p$. We shall think of $(p,v)\in\TSph^2$ as a directed
line in $\R^3$ in direction $p$ which intersects the 
hyperplane $\{x : \langle x,p\rangle=0\}$ in the point $v$.

For each $i=1,\dotsc,4$, let $f_i\colon \Sph^1\to\R^3$ be a continuous 
injective map such that $f_i(\Sph^1)=C_i$. Consider the pair $f_1,f_2$,
and for $x,y\in\Sph^1$ let $h_{12}(x,y)\in \TSph^2$ be the directed
line that goes from $f_2(y)$ to $f_1(x)$. The result of composition
of $h_{12}\colon \Sph^1\times \Sph^1\to \TSph^2$ with the projection
map $\pi\colon \TSph^2\to \Sph^2$ is the Gauss map $g_{12}=\pi\circ h_{12}$
as defined in \eqref{eq_gaussmap}. By the assumption
the degree of $g_{12}$ is non-zero. Since $\Sph^2$ is a
deformation retract of $\TSph^2$, the projection map $\pi$
induces isomorphism between the homology groups
of $\TSph^2$ and $\Sph^2$, and hence the degree of $h_{12}$ is non-zero.

Let $\Thom=\Thom(\TSph^2)$ be the Thom space 
of $\TSph^2$. It is a bundle over $\Sph^2$ obtained
from $\TSph^2$ replacing each fiber by its one-point compactification,
and identifying all the new points into a single point 
(see page~367 of \cite{bredon} for the motivation and properties).
Let $\sigma\colon \TSph^2\to \Thom$ be the inclusion map. 
Let $A_{12}\defeq(\sigma\circ h_{12})(\Sph^1\times \Sph^1)$ be
the image of $h_{12}$ in $\Thom$. 

In the same way as we used $f_1$ and $f_2$ to define $h_{12}$ and $A_{12}$,
we define $h_{34}$ and $A_{34}$ using $f_3$ and $f_4$. 
We shall exhibit two homology classes
$\alpha_{12}\in H_2(A_{12},\Z)$ and $\alpha_{34}\in H_2(A_{34},\Z)$
whose intersection product in $\Thom$ is non-zero. It will then
follow by Theorem~VI.11.10 from \cite{bredon} that $A_{12}\cap A_{34}\neq\emptyset$.

Since $\pi$ induces an isomorphism between 
$H_2(\TSph^2,\Z)$ and $H_2(\Sph^2,\Z)$, the definition 
of the linking number implies that the pushforward of 
the homology class $[\Sph^1\times \Sph^1]\in H_2(\Sph^1\times \Sph^1,\Z)$ 
by $h_{12}$ is the homology class $\lk(C_1,C_2)[\Sph^2]\in H_2(\TSph^2,\Z)$.
Let $D\colon H^k(M,\Z)\to H_{\dim M-k}(M,\Z)$ be a Poincare duality
on orientable manifold $M$. The homology class $\sigma_*([\Sph^2])$ is the 
$D^{-1}(\tau)$, where $\tau\in H^2(\Thom)$ is the Thom class of $\Thom$. 
The homology class 
$\alpha_{12}\defeq (\sigma\circ h_{12})_*([\Sph^1\times \Sph^1])$
is supported on $A_{12}$, and similarly defined class 
$\alpha_{34}$ is supported on $A_{34}$. The intersection
product of $\alpha_{12}$ and $\alpha_{34}$ is then 
$\lk(C_1,C_2)\lk(C_3,C_4)D^{-1}(\tau^2)$. By the calculation
on page~382 of \cite{bredon} $(\tau^2)\cap [\TSph^2]=i_*(\chi \cap [\Sph^2])$, 
where $\chi$ is the Euler class of the bundle $\TSph^2\to \Sph^2$ and $i\colon \Sph^2\to \TSph^2$
is the zero section. Thus $\tau^2$ is non-zero, and hence the intersection product of $\alpha_{12}$ and $\alpha_{34}$
is non-zero as well, as claimed.
\end{proof}

The following lemma is analogous to the inequality $\crn(G)\geq
\abs{E}-3\abs{V}+6$ that is used in the proof of the usual crossing lemma.
\begin{lemma}\label{lem_boost}
Let $G=(V,E)$ be a graph with at least $\abs{E}\geq 4^{39}\abs{V}$ edges.
Then $\crn_4(G)\geq \abs{E}^2/2^{28}\log_2^2 \abs{V}$. 
\end{lemma}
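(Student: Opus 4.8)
The plan is to combine the tools assembled so far into a single density increment. First I would apply Corollary~\ref{cor_manydisjoint} with $t=6$ and $C=4^{38}$ to the graph $G$: since $\abs{E}\geq 4^{39}\abs{V}=C\cdot 4^{t^2}\abs{V}$ holds (as $4^{36}\cdot 4^{38}=4^{74}$... wait, one must check the arithmetic carefully — with $t=6$ we have $4^{t^2}=4^{36}$, so the hypothesis $\abs{E}\geq C4^{36}\abs{V}$ needs $C4^{36}\leq 4^{39}$, i.e.\ $C=4^3=64\geq 3$, which works), we obtain at least $k\geq \abs{E}/(16\cdot 36\cdot\log 6\cdot\log_{32}\abs{V})\geq \abs{E}/(2^{14}\log_2\abs{V})$ edge-disjoint subdivisions $H_1,\dotsc,H_k$ of $K_6$ inside $G$; I would absorb the constants so that the bound reads $k\geq \abs{E}/(2^{c}\log_2\abs{V})$ for an explicit small $c$.

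Next, for each $H_j$ I would invoke Lemma~\ref{lem_conway_gordon}: in the spatial drawing induced on the $K_6$ underlying $H_j$, there are two vertex-disjoint triangles with odd (hence nonzero) linking number. Pulling this back through the subdivision, each $H_j$ contains a pair of vertex-disjoint cycles $C_j, C_j'$ (built from the six branch vertices of $H_j$ and the subdivided paths between them) whose drawn images are closed curves with $\lk\neq 0$; note these cycles use only edges of $H_j$, so the cycles coming from distinct $H_j$'s are edge-disjoint, though they may share vertices. Now take any two indices $j\neq j'$: applying Lemma~\ref{lem_linking} to the four curves $C_j, C_j', C_{j'}, C_{j'}'$ produces a line meeting all four. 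To extract a genuine space crossing I need four pairwise vertex-disjoint edges on this line; since the four cycles are edge-disjoint I can pick one edge from each cycle on the line, but I must ensure the four chosen edges are vertex-disjoint. This is the main obstacle: a single line can pass through many edges of these cycles and the endpoints could coincide. I would handle it by a counting/pigeonhole argument — since each $C_j$ has bounded length (at most $7\cdot 36\cdot\log 6\cdot\log_{32}\abs{V}$ edges, i.e.\ $O(\log\abs{V})$), and the line meets each curve in a set of points, one can select edges greedily avoiding the $O(1)$ forbidden endpoints; alternatively, perturb so that the line meets each curve transversally in few arcs. In the worst case I might instead argue that the total number of (line, $4$-tuple of $H$-indices) incidences is large and that only a bounded fraction can fail vertex-disjointness.

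Then I would count: the number of unordered pairs $\{j,j'\}$ is $\binom{k}{2}\geq k^2/4 - k/2 \geq k^2/8$ once $k$ is large (which follows from $\abs{E}\geq 4^{39}\abs{V}$). Each such pair yields at least one space crossing, and a given space crossing — being a quadruple of four specific vertex-disjoint edges plus a line — can be counted for at most one pair $\{j,j'\}$, because the edge-disjointness of the $H_j$'s means each edge of the quadruple lies in a unique $H_j$, so the four edges determine at most two indices (two edges must lie in $C_j\cup C_j'\subseteq H_j$ and the other two in $H_{j'}$); hence there is a bounded-to-one map from pairs to space crossings. Actually I should be careful: the two edges assigned to index $j$ might split as one from $C_j$ and one from $C_j'$, but that is fine, both are in $H_j$. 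Therefore $\crn_4(G)\geq \frac{1}{O(1)}\binom{k}{2}\geq \frac{1}{O(1)}\cdot\frac{k^2}{8}\geq \frac{\abs{E}^2}{2^{O(1)}\log_2^2\abs{V}}$, and chasing the constants — $16t^2\log t\approx 2^{14}$ giving $k^2$ a denominator $\approx 2^{28}$, with the extra $1/8$ and bounded-to-one factors absorbed — yields $\crn_4(G)\geq \abs{E}^2/(2^{28}\log_2^2\abs{V})$, provided the threshold $\abs{E}\geq 4^{39}\abs{V}$ is enough to make $k$ exceed a small absolute constant so that $\binom{k}{2}\geq k^2/8$. The only real difficulty is the vertex-disjointness selection in the application of Lemma~\ref{lem_linking}; everything else is bookkeeping with the constants, which I would organize to land exactly at the stated bound.
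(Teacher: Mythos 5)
There is a genuine gap, and you have correctly located it yourself: the vertex-disjointness of the four cycles handed to Lemma~\ref{lem_linking}. Your $K_6$-subdivisions $H_1,\dotsc,H_k$ are only \emph{edge}-disjoint, so the cycles $C_j,C_j'$ and $C_{j'},C_{j'}'$ coming from two different subdivisions may share vertices — indeed, since $\abs{E}$ can be quadratic in $\abs{V}$, a huge number of the $H_j$'s could all pass through one high-degree vertex, so there is no reason why more than a vanishing fraction of the pairs $\{j,j'\}$ should be vertex-disjoint. None of your proposed repairs closes this: Lemma~\ref{lem_linking} requires the four closed curves to be \emph{disjoint} in $\R^3$, so if $C_j$ and $C_{j'}$ share a vertex the lemma cannot even be invoked and no line is produced; greedily choosing edges on the line, or perturbing, cannot manufacture a line that the lemma does not give you; and the ``only a bounded fraction of pairs fail'' claim is exactly what is false in general.

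The paper's proof inserts one extra step that you omit entirely and that resolves this cleanly: before extracting the $K_6$-subdivisions, it applies Lemma~\ref{lem_partitiongraph} to split $V$ into two classes $V_1,V_2$ so that each induced subgraph $G_i$ keeps at least $\abs{E}/4-\sqrt{\abs{V}\abs{E}}\geq\abs{E}/8$ edges. It then runs your Corollary~\ref{cor_manydisjoint} plus Conway--Gordon argument separately inside $G_1$ and inside $G_2$, obtaining $J$ linked pairs $(C_{1,j},C_{1,j}')$ in $G_1$ and $J$ linked pairs $(C_{2,j},C_{2,j}')$ in $G_2$, and only ever applies Lemma~\ref{lem_linking} to a quadruple consisting of one linked pair from each side. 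Vertex-disjointness of all four cycles is then automatic (the two pairs live on disjoint vertex sets, and each pair is vertex-disjoint by Conway--Gordon), so any four edges the line meets, one per cycle, form a legitimate space crossing; edge-disjointness of all the cycles makes the $J^2$ crossings distinct. Your constant-chasing is otherwise in the right spirit (the loss of a factor $8$ from the partition is exactly what inflates $16t^2\log t$ to the $2^{14}$ in the definition of $J$), but without the bipartition the argument does not go through.
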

\begin{proof}
With foresight set $J=\bigl\lceil \abs{E}/2^{14}\log_2\abs{V}\bigr\rceil$.
By Lemma~\ref{lem_partitiongraph} the graph splits into two vertex-disjoint 
graphs $G_1,G_2$ that have at least $\abs{E}/4-\sqrt{\abs{V}\abs{E}}\geq 
\abs{E}(1/4-1/4^{19})\geq \abs{E}/8$ 
edges each. By Corollary~\ref{cor_manydisjoint} each of $G_i$ contains
a family of $\abs{E}/(8\cdot 16\cdot 6^2 \log 6 \log_2 \abs{V})\geq J$ edge-disjoint subdivisions 
of $K_6$. Thus, by 
Lemma~\ref{lem_conway_gordon} we obtain a family of $J$ pairs of cycles
$(C_{i,j},C_{i,j}')_{j=1}^J$ in $G_i$, such that $C_{i,j}$ and $C_{i,j}'$ are
vertex-disjoint, all the cycles are edge-disjoint, and $\lk(C_{i,j},C_{i,j}')\geq 1$. 
By lemma~\ref{lem_linking} for every $1\leq j_1,j_2\leq J$ there is a line
that intersects $C_{1,j_1},C_{1,j_1}',C_{2,j_2},C_{2,j_2}'$. Furthermore, the four
cycles are vertex disjoint. As all the cycles are edge-disjoint, the
$J^2$ space crossings obtained in this manner are distinct. 
\end{proof}
\begin{corollary}
If $G$ is any graph, and $B\geq \abs{V}$ 
then $\crn_4(G)\geq \frac{\abs{E}^2-4^{80}\abs{V}^2}{2^{28}\log_2^2 B}$.
\end{corollary}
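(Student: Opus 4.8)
The plan is to reduce the general bound to the special case already covered by Lemma~\ref{lem_boost} by a standard random-sampling (``probabilistic amplification'') argument, the analogue of how one boosts $\crn(G)\ge\abs{E}-3\abs{V}+6$ to the full crossing lemma. First I would fix a graph $G=(V,E)$ and a real parameter $p\in(0,1]$ to be chosen later, and form a random induced subgraph $G_p$ by keeping each vertex of $G$ independently with probability $p$. Writing $n'=\abs{V(G_p)}$ and $m'=\abs{E(G_p)}$, we have $\E[n']=p\abs{V}$ and $\E[m']=p^2\abs{E}$. The key observation is that any fixed spatial drawing of $G$ restricts to a spatial drawing of $G_p$, and a space crossing of $G$ survives in $G_p$ exactly when all eight endpoints of its four edges are retained, which happens with probability $p^8$ (the eight vertices are distinct since the four edges are vertex-disjoint). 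Hence $\E[\crn_4(G_p)]\le p^8\crn_4(G)$ for the fixed optimal drawing, and therefore $p^8\crn_4(G)\ge\E[\crn_4(G_p)]$.

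Next I would apply Lemma~\ref{lem_boost} inside the expectation. We cannot apply it to every outcome, since $G_p$ need not satisfy the hypothesis $m'\ge 4^{39}n'$; but we only need it to hold ``on average.'' Concretely, whenever $m'\ge 4^{39}n'$ Lemma~\ref{lem_boost} gives $\crn_4(G_p)\ge (m')^2/(2^{28}\log_2^2 n')$, and since $n'\le\abs{V}\le B$ we may replace $\log_2 n'$ by $\log_2 B$ in the denominator, so $\crn_4(G_p)\ge (m')^2/(2^{28}\log_2^2 B)$ holds unconditionally after we also subtract off a correction term that absorbs the bad event $\{m'<4^{39}n'\}$. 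The cleanest way to package this is the inequality
\begin{equation*}
\crn_4(G_p)\ \ge\ \frac{(m')^2-4^{39}\,(m')\,n'}{2^{28}\log_2^2 B}\ \ge\ \frac{(m')^2-4^{39}\,\bigl((m')^2+(n')^2\bigr)/2\cdot(\text{const})}{2^{28}\log_2^2 B},
\end{equation*}
valid for every outcome of $G_p$ (when $m'<4^{39}n'$ the numerator is $\le 0$ and the bound is vacuous, when $m'\ge 4^{39}n'$ it follows from Lemma~\ref{lem_boost}); one then bounds $(m')n'\le\tfrac12((m')^2+(n')^2)$ only if convenient, or keeps the mixed term and estimates $\E[(m')n']$ directly. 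Taking expectations and using $\E[(m')^2]\ge(\E m')^2=p^4\abs{E}^2$, $\E[(m')n']\le$ (something like) $p^3\abs{E}\abs{V}$ up to lower-order terms, and $\E[(n')^2]\le p^2\abs{V}^2+p\abs{V}$, I obtain
\begin{equation*}
p^8\crn_4(G)\ \ge\ \frac{p^4\abs{E}^2-O(p^3\abs{E}\abs{V})}{2^{28}\log_2^2 B}.
\end{equation*}
Dividing by $p^8$ and choosing $p$ a small absolute constant times $\abs{V}/\abs{E}$ (so that the main term dominates the correction), the $\abs{V}$-dependent terms get multiplied by a bounded power of $\abs{E}/\abs{V}$, and after collecting constants one lands at $\crn_4(G)\ge\bigl(\abs{E}^2-4^{80}\abs{V}^2\bigr)/(2^{28}\log_2^2 B)$, matching the claimed exponent once the crude constant $4^{80}$ is verified to be generous enough.

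The main obstacle is purely bookkeeping: making the second-moment estimates $\E[(m')^2]$, $\E[(m')n']$, $\E[(n')^2]$ precise enough, and choosing the sampling probability $p$ so that (i) $p\le 1$, (ii) the negative ``$4^{39}\,(m')n'$'' term is dominated by half of the positive term $(m')^2$ in expectation, and (iii) all the lost constants, together with the $p^{-8}$ that reappears after dividing, aggregate to no more than the stated $4^{80}$. None of this is conceptually hard; the only subtlety is that $\crn_4$ is not linear, so one must be careful to use the one-sided inequality $\E[\crn_4(G_p)]\le p^8\crn_4(G)$ (which holds for the fixed optimal drawing of $G$) together with a lower bound on $\crn_4(G_p)$ that is valid for every realization, and then combine them via $\E$. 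I expect the argument to go through with room to spare, which is presumably why the authors are content to state a clean inequality with an explicit but non-optimized constant.
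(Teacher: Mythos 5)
You have the right proof buried inside your proposal, but you have wrapped it in machinery that does not belong here and whose final step does not follow as written. This corollary is the ``unboosted'' counting inequality, the analogue of $\crn(G)\ge\abs{E}-3\abs{V}+6$; the random-sampling amplification you describe is not part of its proof but is the proof of Theorem~\ref{thm_space_crossing_lemma}, which \emph{uses} this corollary as input. The entire proof of the corollary is the two-case observation you make in passing when justifying your ``per-realization'' bound: if $\abs{E}<4^{40}\abs{V}$ then $\abs{E}^2-4^{80}\abs{V}^2<0$ and the inequality is vacuous since $\crn_4(G)\ge 0$; if $\abs{E}\ge 4^{40}\abs{V}\ge 4^{39}\abs{V}$ then Lemma~\ref{lem_boost} gives $\crn_4(G)\ge\abs{E}^2/(2^{28}\log_2^2\abs{V})\ge(\abs{E}^2-4^{80}\abs{V}^2)/(2^{28}\log_2^2 B)$, using $B\ge\abs{V}$ to weaken the denominator. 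That is all; the parameter $B$ exists only so that the corollary can later be applied to a random induced subgraph $G'$ while keeping $\log_2\abs{V(G)}$ in the denominator.

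Beyond being a detour, your sampling argument does not actually land where you claim. After taking expectations you have $p^8\crn_4(G)\ge\bigl(p^4\abs{E}^2-O(p^3\abs{E}\abs{V})\bigr)/(2^{28}\log_2^2 B)$; dividing by $p^8$ with $p$ a constant times $\abs{V}/\abs{E}$ yields a main term $p^{-4}\abs{E}^2=\Theta(\abs{E}^6/\abs{V}^4)$, not $\abs{E}^2$. So the conclusion of your computation is (a version of) Theorem~\ref{thm_space_crossing_lemma}, not the stated corollary; it only implies the corollary in the dense regime after a further comparison, and it needs the corollary-type per-realization bound to get off the ground in the first place. Strip away the sampling, keep the case analysis, and you have the intended one-line proof.
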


\begin{proof}[Proof of Theorem~\ref{thm_space_crossing_lemma}]
Given a graph $G=(V,E)$ with $\abs{E}\geq 4^{41}\abs{V}$ edges,
let $p=4^{41}\abs{V}/\abs{E}$. Let $V'\subset V$ be obtained 
by choosing each element of $V$ independently with probability $p$.
Let the $G'=(V',E')$ be the induced subgraph $G$ on $V'$. By the preceding 
corollary with $B=\abs{V}$ we have
\begin{equation*}
\crn_4(G')\geq \frac{\abs{E'}^2-4^{80}\abs{V'}^2}{2^{26}\log_2^2 \abs{V}}.
\end{equation*}
We shall estimate the expectation of both sides.
On one hand, $\E[\crn_4(G')]\leq p^8\crn_4(G)$ since a space crossing
in a fixed drawing survives with probability $p^8$. On the other hand,
$\E[\abs{E'}^2]\geq p^4\abs{E}^2$ 
as every pair of edges survives with probability at least $p^4$
(the probability is higher if the two edges overlap). 
Furthermore, $\E[\abs{V'}^2]=p^2\abs{V}^2+(p-p^2)\abs{V}\leq 4p^2\abs{V}^2$ by the
choice of $p$. Hence,
\begin{equation*}
p^8\crn_4(G)\geq \frac{p^4\abs{E}^2-4^{81}p^2\abs{V}^2}{2^{28}\log_2^2 \abs{V}},
\end{equation*}
and
\begin{equation*}
\crn_4(G)\geq \frac{4^{81}\abs{V}^2}{2^{28}(4^{41}\abs{V}/\abs{E})^6\log_2^2\abs{V}}
=\frac{\abs{E}^6}{4^{179}\abs{V}^4\log_2^2\abs{V}}\qedhere
\end{equation*}
\end{proof}
\begin{remark} By using Lemma~\ref{lem_boost} instead of its corollary,
and invoking large deviation inequalities, the above can be improved to 
$\crn_4(G)\geq c\frac{\abs{E}^6}{\abs{V}^4\log_2^2(\abs{V}^2/\abs{E})}$.
As the logarithmic factors are almost certainly superfluous, we chose
the more transparent argument instead.
\end{remark}

\section*{Rectilinear space crossing numbers of pseudo-random  graphs}\label{pf_thm_expand}
To prove Theorem~\ref{thm_expand} we shall need the same-type lemma
for semi-algebraic relations. It is inspired by the same-type lemma of
B\'ar\'any and Valtr \cite[Theorem~2]{barany_valtr}, and by the Szemer\'edi-type 
result from \cite{fglnp}. In the final version of \cite{fglnp}
(to appear in J.~Reine Angew.~Math), a more general result is proved 
independently. Our proof technique is borrowed from the previous results, with 
only minor pretense at novelty.

For a real number $x$ its sign $\sign x$ is $-1,0,+1$ according to whether
$x$ is negative, zero, or positive, respectively.
A semi-algebraic relation on $k$-tuples of vectors $x_1,\dotsc,x_k$
is an arbitrary logical formula (in the language of ordered fields) 
of the form 
\begin{equation*}
Q_1 t_1\in\R\, Q_2 t_2\in \R\, \dotsb Q_l t_l\in \R\ (I_1 \wedge \dotsb \wedge I_m)
\end{equation*}
where each of $Q_1,\dotsc,Q_l$ is either $\exists$ or $\forall$
and each of $I_1,\dotsc,I_m$ is of the form 
$\sign f(x_1,\dotsc,x_k,t_1,\dotsc,t_l)=s\in\{-1,0,+1\}$, where $f$ is a polynomial. 

\begin{lemma}[Proof is in on p.~\pageref{pf_lem_sametype}]\label{lem_sametype}
If $R$ is a semi-algebraic relation in $k$ variables, then
there is a constant $\veps=\veps(R)>0$ such that the following holds.
For every collection of $k$ finite sets $\F_1,\dotsc,\F_k$, there are
subsets $\F_i'\subset\F_i$ such that:
\begin{enumerate}
\item $\F_i'$ are large: $\abs{\F_i'}\geq \veps \abs{\F_i}$,
\item $R$ is constant on $\F_1'\times\dotsb\times \F_k'$: either for all 
$(x_1,\dotsc,x_k)\in\F_1'\times\dotsb\times \F_k'$ the relation $R(x_1,\dotsc,x_k)$
holds, or for all $(x_1,\dotsc,x_k)\in \F_1'\times\dotsb\times \F_k'$ the relation
$R(x_1,\dotsc,x_k)$ does not hold.
\end{enumerate}
\end{lemma}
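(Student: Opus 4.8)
The plan is to prove Lemma~\ref{lem_sametype} by induction on the quantifier-free case first, and then reduce the general quantified case to it. So the main work is the following special case: $R$ is a Boolean combination of conditions $\sign f_j(x_1,\dots,x_k) = s_j$ for finitely many polynomials $f_1,\dots,f_m$. To handle a quantified relation $R$, observe that by quantifier elimination in the theory of real closed fields (Tarski--Seidenberg), $R$ is logically equivalent to a quantifier-free semi-algebraic relation $R'$ in the same free variables $x_1,\dots,x_k$; since $R$ and $R'$ define the same subset of $(\R^d)^k$, it suffices to prove the lemma for $R'$. Thus from now on I may assume $R$ is quantifier-free, given by polynomials $f_1,\dots,f_m$.

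The core argument is a greedy/iterative refinement, one coordinate at a time, using a cutting or partitioning argument in each coordinate. I would proceed by induction on $k$. For the inductive step, fix the last coordinate. For each choice of $x_k\in\F_k$, the sign vector $\bigl(\sign f_1(x_1,\dots,x_k),\dots,\sign f_m(x_1,\dots,x_k)\bigr)$, viewed as a function of $(x_1,\dots,x_{k-1})$, is itself a quantifier-free semi-algebraic condition in the first $k-1$ variables. The key point is a \emph{bounded complexity / VC-type} observation: as $x_k$ ranges over all of $\R^d$, the family of subsets of $\F_1\times\dots\times\F_{k-1}$ cut out by these sign conditions has bounded VC dimension (depending only on the degrees and number of the $f_j$), hence by the $\veps$-net / $\veps$-approximation theorem (or a direct Milnor--Thom bound on the number of realizable sign patterns) there are only polynomially many distinct such subsets, say at most $N=N(R)$ of them as $x_k$ varies. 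Group the elements of $\F_k$ according to which of these $\leq N$ subsets they induce; pick the most popular group, obtaining $\F_k'\subset\F_k$ with $\abs{\F_k'}\geq\abs{\F_k}/N$ such that all $x_k\in\F_k'$ induce \emph{the same} semi-algebraic condition $S(x_1,\dots,x_{k-1})$ on the first $k-1$ sets. Now apply the induction hypothesis to the single relation $S$ (which has complexity bounded in terms of $R$) to find $\F_1'\subset\F_1,\dots,\F_{k-1}'\subset\F_{k-1}$ of proportional size on which $S$ is constant. Then $R$ itself is constant on $\F_1'\times\dots\times\F_k'$: its truth value on any tuple there equals the constant value of $S$, since the sign vector only depends on whether we are in or out of the fixed subset, which is the same for every $x_k\in\F_k'$. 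Wait — I should be slightly more careful: it is the full sign vector $(\sign f_1,\dots,\sign f_m)$, not just one bit, that I want to stabilize; but the same counting works, since the number of distinct maps $x_k\mapsto(\text{sign vector as a function of }x_1,\dots,x_{k-1})$ is still bounded by a Milnor--Thom estimate, and once the sign vector is constant so is any Boolean combination of its entries, in particular $R$. The base case $k=1$ is just the pigeonhole principle on the (boundedly many) sign patterns of $f_1,\dots,f_m$ restricted to $\F_1$.

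For the bound on $N$ in the inductive step, I would invoke the standard Milnor--Thom / Oleinik--Petrovsky--Thom estimate: for polynomials of degree at most $\Delta$ in $d(k-1)$ variables, the number of realizable sign patterns — equivalently, the number of cells in the arrangement of the hypersurfaces $f_j(x_1,\dots,x_{k-1},y)=0$ as $j$ ranges and as the point $(x_1,\dots,x_{k-1})$ ranges over $\F_1\times\dots\times\F_{k-1}$, with $y$ the parameter — is at most $C(d,k,\Delta)\cdot(\text{input size})^{d}$ or simply bounded by a constant depending only on $R$ once we fix the finite sets, because what matters is the number of \emph{distinct} subsets of the finite product, and two parameter values $y,y'$ give the same subset whenever they lie in the same cell of the arrangement in parameter space determined by the finitely many polynomials $x\mapsto f_j(x,\cdot)$ evaluated at the points of the product; the number of such cells is bounded by a polynomial in $\abs{\F_1}\cdots\abs{\F_{k-1}}$ of degree $O_R(1)$, which after dividing through still leaves $\abs{\F_k'}$ a constant fraction of $\abs{\F_k}$ \emph{only if} we are more careful — in fact we do not need a constant fraction here, a $1/\mathrm{poly}$ fraction is not enough. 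The resolution, and the step I expect to be the main obstacle, is to not take the most popular subset directly but to first apply the induction hypothesis inside the parameter analysis: one refines the first $k-1$ sets \emph{before} counting, so that relative to the refined sets the number of distinct induced subsets drops to a constant. Concretely, one sets up a double induction, or equivalently runs the argument so that the $\veps_i$ are chosen in the right order; this bookkeeping — making sure each refinement only loses a constant factor and that the constants compose to a single $\veps(R)>0$ — is the delicate part, and is exactly where the arguments of Bárány--Valtr and of \cite{fglnp} do their real work. Everything else (quantifier elimination to reduce to the quantifier-free case, Milnor--Thom to bound sign-pattern counts, pigeonhole for the base case) is standard.
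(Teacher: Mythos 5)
Your reduction to the quantifier-free case via Tarski--Seidenberg, the induction on $k$, and the pigeonhole base case all match the paper. But the heart of the inductive step is missing, and you essentially say so yourself. The naive plan --- group the points of $\F_k$ by which subset of $\F_1\times\dotsb\times\F_{k-1}$ they induce and take the most popular class --- fails because the number of distinct induced subsets is polynomial in $\abs{\F_1}\dotsm\abs{\F_{k-1}}$, not a constant, so the surviving fraction of $\F_k$ is not bounded below by any $\veps(R)$. Your proposed repair, ``refine the first $k-1$ sets before counting so that the number of distinct induced subsets drops to a constant,'' does not work as stated: even restricted to constant-fraction subsets $\F_1'\times\dotsb\times\F_{k-1}'$, the points of the refined product still determine polynomially many hypersurfaces in the $x_k$-parameter space, so the number of cells (hence of distinct induced subsets) remains polynomial in the input size. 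No reordering of the $\veps_i$ fixes this; a genuinely different idea is required, and you have only gestured at where it should go.

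The mechanism the paper uses, which you would need to supply, is the following. Write $f$ as a polynomial in $x_k$ with $t=t_k(f)$ terms and linearize: there is a monomial map $\pi\colon\R^{d_k}\to\R^{t}$ and a polynomial $f'$, linear in the last block, with $f(x_1,\dotsc,x_k)=f'(x_1,\dotsc,x_{k-1},\pi(x_k))$. Apply a Yao--Yao-type partition (Lehec's version) to the multiset $\pi(\F_k)$: one obtains $2^{t}$ simplices, each containing at least a $2^{-t}$ fraction of $\pi(\F_k)$, sharing a common vertex, such that \emph{every} closed halfspace containing that vertex contains one whole simplex. For fixed $(x_1,\dotsc,x_{k-1})$ the zero set of $f'$ in the last block is a hyperplane, and which simplex lies in which closed halfspace is determined by the signs of the boundedly many polynomials $g_m(x_1,\dotsc,x_{k-1})=f'(x_1,\dotsc,x_{k-1},v_m)$, one for each vertex $v_m$ of the simplices (at most $3^{t}$ of them). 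The induction hypothesis is applied to these polynomials in $k-1$ variables --- not to a relation extracted by counting induced subsets --- after which a fixed simplex $\Delta_j$ lies on a fixed side of the hyperplane for all choices from $\F_1'\times\dotsb\times\F_{k-1}'$, and at least half of its points give a constant sign of $f$, yielding $\abs{\F_k'}\geq 2^{-t-1}\abs{\F_k}$. This partition-plus-linearization step is the actual content of the lemma, and it is absent from your proposal.
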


\begin{proof}[Proof of Theorem~\ref{thm_expand}]
Let the graph $G$ with a rectilinear spatial drawing be given. 
Let $R$ be the relation on $8$-tuples $x_1,\dotsc,x_8\in\R^3$ given by
``the straight-line segments $x_1x_2$, $x_3x_4$, $x_5x_6$, $x_7x_8$
form a space crossing''. The relation is semi-algebraic. Indeed, 
it is given by
\begin{align*}
&R(x_1,\dotsc,x_8)=\exists t_1,t_2,t_3,t_4,\lambda_1,\lambda_2,\lambda_3,\lambda_4\in\R,\ 
\exists y,v\in\R^3\\
&\qquad (0<t_1,t_2,t_3,t_4<1)\wedge \\
&\qquad(t_1x_1+(1-t_1)x_2=y+\lambda_1 v) \wedge 
(t_2x_3+(1-t_2)x_4=y+\lambda_2 v) \wedge\\
&\qquad(t_3x_5+(1-t_3)x_6=y+\lambda_3 v) \wedge
(t_4x_7+(1-t_4)x_8=y+\lambda_4 v). 
\end{align*}

By the preceding lemma applied $8!\binom{12}{8}$ times 
there are $12$ subsets $V_1,\dotsc,V_{12}$ of $V(G)$
such that $\abs{V_i}\geq \veps\abs{V}$ for $i=1,\dotsc,12$ and
$R$ is constant on all the product sets of the form 
$V_{\sigma(1)}\times\dotsb\times V_{\sigma(8)}$ for any injective map $\sigma\colon [8]\to [12]$. 
Pick any twelve points $x_1\in V_1,\dotsc,x_{12}\in V_{12}$.
Since graph $K_{12}$ contains $K_{6,6}$, which has a 
positive space crossing by \cite[Corollary~3.5]{zivaljevic_k6}, there is a
map $\sigma\colon [8]\to [12]$
 such that $R(x_{\sigma(1)},\dotsc,x_{\sigma(8)})$ holds. 
Since $R$ is constant on $V_{\sigma(1)}\times\dotsb\times V_{\sigma(8)}$, 
we obtain at least as many space crossings as the number of
quadruples of edges of the form $e_1,e_2,e_3,e_4$, where $e_i$
is between $V_{2i-1}$ and $V_{2i}$.
\end{proof}

\section*{Straight-line spatial drawing with very few space crossings}\label{pf_thm_constr}
\paragraph{Review of stair-convexity}
To prove Theorem~\ref{thm_constr} we employ stair-convexity, which is
a method to make constructions in $\R^d$ in such a way that
convex sets, which are geometric objects, are replaced by their combinatorial
cousins, stair-convex sets. 

The basis for the connection between convexity and stair-convexity is the 
\emph{stretched grid} $G_s=G_s(n)$ which is the Cartesian product 
$X_1\times X_2\times \dotsb\times X_d$, where $X_1,X_2,\dotsc,X_d$ 
are ``fast-growing'' sequences, with each $X_i$ growing much faster than $X_{i-1}$.
Let $X_i=\{x_{i1},\dotsc,x_{in}\}$. The actual choice of $X_1,X_2,\dotsc,X_d$ is 
not important, as long as they grow quickly enough.

More precisely, for each coordinate $i=1,\dotsc,d$ there is a relation $\prec_i$,
such that the condition on the growth of $X_i$ is that $1=x_{i1}\prec_i x_{i2}\prec_i \dotsb \prec_i x_{im}$.
The relation $\prec_i$ is not a linear relation, but it is transitive, and is compatible
with the usual linear ordering on $\R$ in the sense that $A\prec_i B$ implies $A<B$.

Since the coordinates in $G_s$ grow very fast, to visualize and to work with the grid  
it is convenient to rescale $G_s$. Let $BB(G_s)=[1,x_{1m}]\times\dotsb\times[1,x_{dm}]$ be the 
``bounding box'' of $G_s$. Let the \emph{uniform grid} be
\begin{equation*}
G_u = G_u(n) \defeq \bigl\{0,\frac{1}{n-1},\frac{2}{n-1},\dotsc,\frac{n-1}{n-1}\bigr\}^d,
\end{equation*}
and pick a bijection $\pi\colon BB(G_s)\to [0,1]^d$ that maps $G_s$ onto $G_u$
and preserves ordering in each coordinate.


\begin{figure}\centering
\parbox{2in}{\begin{tikzpicture}[scale=0.35]
\def\gridsize{14}
\foreach \x in {0,...,\gridsize}
  \foreach \y in {0,...,\gridsize}
   {
    \ifthenelse{\(\equal{\x}{3}\AND \equal{\y}{3}\)\OR\(\equal{\x}{8}\AND \equal{\y}{10}\)\OR\(\equal{\x}{5}\AND \equal{\y}{12}\)\OR\(\equal{\x}{11}\AND \equal{\y}{4}\)}
    {
     \fill (\x,\y) circle (4.2pt) +(-45:0.48) node {$\scriptstyle\ifthenelse{\equal{\x}{3}}{a}{\ifthenelse{\equal{\x}{8}}{b}{\ifthenelse{\equal{\x}{5}}{c}{d}}}$};
    }
    {
     \fill (\x,\y) circle (2.3pt);
    }
   }
\input{plot1.dat};
\input{plot2.dat};
\end{tikzpicture}
\caption{Image under $\pi$ of line segments $[a,b]$ and $[c,d]$}%
\label{fig_normal}}\quad\qquad%
\parbox{2in}{
\begin{tikzpicture}[scale=0.35]
\def\gridsize{14}
\foreach \x in {0,...,\gridsize}
  \foreach \y in {0,...,\gridsize}
   {
    \ifthenelse{\(\equal{\x}{3}\AND \equal{\y}{3}\)\OR\(\equal{\x}{8}\AND \equal{\y}{10}\)\OR\(\equal{\x}{5}\AND \equal{\y}{12}\)\OR\(\equal{\x}{11}\AND \equal{\y}{4}\)}
    {
     \fill (\x,\y) circle (4.2pt) +(-45:0.48) node {$\scriptstyle\ifthenelse{\equal{\x}{3}}{a}{\ifthenelse{\equal{\x}{8}}{b}{\ifthenelse{\equal{\x}{5}}{c}{d}}}$};
    }
    {
     \fill (\x,\y) circle (2.3pt);
    }
   }
\draw plot coordinates{(3.0,3.0) (3.0,10.0) (8.0,10.0)};
\draw plot coordinates{(5.0,12.0) (11.0,12.0) (11.0,4.0)};
\end{tikzpicture}
\caption{Image under $\pi$ of stair-paths $\sigma(a,b)$ and $\sigma(c,d)$}%
\label{fig_stair}}
\end{figure}

The figure \ref{fig_normal} above shows the image under $\pi$ of two straight line 
segments connecting the grid points for $d=2$. As the 
uniform grid becomes finer, the straight line segments become closer to a piecewise
linear curve, the stair-path. A \emph{stair-path} joining points 
$a=(a_1,a_2,\dotsc,a_d)$ and $b=(b_1,b_2,\dotsc,b_d)$ consists of at most $d$ closed
line segments, each parallel to a different coordinate axis. The 
definition goes by induction on $d$. For $d=1$, $\sigma(a,b)$ is 
simply the segment $ab$. For $d\geq 2$, after possibly interchanging 
$a$ and $b$, assume $a_d\leq b_d$. We set $a'=(a_1,a_2,\dotsc,a_{d-1},b_d)$
and let $\sigma(a,b)$ be the union of the segment $aa'$ and of the stair-path 
$\sigma(a',b)$, which is defined recursively after ``forgetting'' the 
(common) last coordinate of $a'$ and $b$. A set $S\subseteq \R^d$ is 
\emph{stair-convex} if for every $a,b\in S$ 
we have $\sigma(a,b)\subseteq S$. Since the intersection of stair-convex 
sets is stair-convex, we can define \emph{stairconvex hull} 
of a set $S\subset \R^d$ as the intersection of all stair-convex 
sets containing $S$.

Two points $(a_1,a_2,\dotsc,a_d)$ and $(b_1,b_2,\dotsc,b_d)$ in $BB(G_s)$ 
are \emph{$k$-far apart in $i$'th coordinate}, if there are $k-1$ real 
numbers $r_1,\dotsc,r_{k-1}$ such that either 
$a_i\prec_i r_1\prec \dotsb r_{k-1}\prec_i b_i$ or 
$b_i\prec_i r_1\prec \dotsb r_{k-1}\prec_i a_i$. 
Otherwise, we say that $a$ and $b$ are $k$-close in 
$i$'th coordinate. If $a$ and $b$ are $k$-close in every 
coordinate, then we say that $a$ and $b$ are 
\emph{$k$-close}. If $a$ and $b$ lie on $G_s$,
then they are $k$-close if $\pi(a)$ and $\pi(b)$, 
which are points of $G_u$, are separated by fewer
than $k$ points in each coordinate. In the picture above,
the points $a$ and $b$ are $6$-close, but not $5$-close.
For $a,b\in BB(G_s)$ put $\dist(a,b)$ be the least
integer $k$ such that $a$ and $b$ are $k$-close. Note that
$\dist$ satisfies the triangle inequality 
$\dist(a,b)\leq\dist(a,c)+\dist(b,c)$.


Several results capture the intuition that the image of a convex 
set in $BB(G_s)$ looks like a stair-convex set. The following lemma 
of Nivasch is the form that we need. 
\begin{lemma}[Lemma~2.11 in \cite{nivasch_thesis}]
Let $a,b$ be two points in $BB(G_s)$, and let $ab$ and $\sigma(a,b)$ 
be the line segment and the stair-path between $a$ and $b$, respectively. 
Then every point in $ab$ is $1$-close to some point of $\sigma(a,b)$ and vice versa.
\end{lemma}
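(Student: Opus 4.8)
The plan is to induct on the dimension $d$. For $d=1$ the stair-path $\sigma(a,b)$ coincides with the segment $ab$, so there is nothing to prove. For the inductive step, assume (after swapping $a$ and $b$ if necessary) that $a_d\le b_d$, write $\bar a=(a_1,\dotsc,a_{d-1})$ and $\bar b=(b_1,\dotsc,b_{d-1})$ for the projections to the first $d-1$ coordinates, and recall that $\sigma(a,b)=aa'\cup\sigma(a',b)$, where $a'=(a_1,\dotsc,a_{d-1},b_d)$ and $\sigma(a',b)$ is the stair-path of $\bar a,\bar b$ sitting in the hyperplane $\{x_d=b_d\}$. Parametrise the segment by $r(t)=(1-t)a+tb$, $t\in[0,1]$, so that $\bar r(t)=(1-t)\bar a+t\bar b$ traces the segment $\bar a\bar b$.

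The engine of the proof is the extreme growth of the last coordinate: since $X_d$ grows much faster than $X_1,\dotsc,X_{d-1}$, there is a threshold $t^\ast$ --- essentially $t^\ast=x_{d,k-1}/x_{d,k}$, where $b_d=x_{d,k}$ --- with the property that (i) for $t\le t^\ast$ every coordinate of $\bar r(t)$ changes by less than a full grid step, so $\bar r(t)$ is $1$-close to $\bar a$, while (ii) for $t\ge t^\ast$ the last coordinate $r(t)_d$ has crawled to within one grid step of $b_d$, i.e.\ $r(t)_d$ is $1$-close to $b_d$. Checking both statements is a direct computation with the defining chains $1=x_{i,1}\prec_i x_{i,2}\prec_i\dotsb$: statement (ii) holds because $r(t)_d\ge tb_d\ge x_{d,k-1}$ once $t\ge t^\ast$, and statement (i) holds because $t^\ast$ is so small that $a_i+t^\ast b_i$ is still $\prec_i$-indistinguishable from $a_i$ for each $i<d$ --- here one uses that $1/t^\ast=x_{d,k}/x_{d,k-1}$ dwarfs $x_{i,m}/x_{i,2}$.

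With $t^\ast$ in hand I would split both curves into two matching pieces. The piece $r([0,t^\ast])$ of the segment has all of its first $d-1$ coordinates $1$-close to $\bar a$, so each of its points $r(t)$ is $1$-close to the point $(a_1,\dotsc,a_{d-1},r(t)_d)$ of the vertical rung $aa'$; conversely each point of $aa'$ whose last coordinate does not yet exceed $x_{d,k-1}$ is hit, up to $1$-closeness, by some $r(t)$ with $t\le t^\ast$, and each point of $aa'$ with last coordinate past $x_{d,k-1}$ is $1$-close to $b_d$ in its last coordinate and hence $1$-close to $r(t^\ast)$. The piece $r([t^\ast,1])$ has last coordinate $1$-close to $b_d$, and its projection $\bar r([t^\ast,1])$ is a sub-segment of $\bar a\bar b$ differing from $\bar a\bar b$ only in a tiny initial stretch that is entirely $1$-close to $\bar a$. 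Applying the inductive hypothesis to $\bar a,\bar b$ in $\R^{d-1}$, every point of $\sigma(\bar a,\bar b)$ is $1$-close to a point of $\bar a\bar b$ and vice versa; lifting these $(d-1)$-dimensional witnesses back up by appending the last coordinate $b_d$ (which is $1$-close to $r(t)_d$ for every $t\ge t^\ast$) transfers the covering to $\sigma(a',b)$ and $r([t^\ast,1])$.

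The step I expect to be the real obstacle is the last one: the inductive covering of $\sigma(\bar a,\bar b)$ might, a priori, use segment points lying in the discarded initial stretch $\bar r([0,t^\ast))$, and one must argue that discarding that stretch costs nothing --- intuitively because the entire initial rung of $\sigma(\bar a,\bar b)$ near $\bar a$ is already $1$-close to $\bar a=\bar r(0)$. Keeping this exchange from silently inflating a $1$-close witness into a $2$-close one is precisely where the careful definition of the relations $\prec_i$ and the calibration of the growth rates of $X_1,\dotsc,X_d$ enter; this construction-specific bookkeeping is the technical heart of the argument, and it is why a clean-sounding statement like this one still requires a genuine proof rather than a one-line reduction.
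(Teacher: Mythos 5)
You should first be aware that the paper contains no proof of this statement: it is Lemma~2.11 quoted verbatim from Nivasch's thesis, and the surrounding text deliberately leaves the relations $\prec_i$ and the growth conditions on $X_1,\dotsc,X_d$ unspecified (``the actual choice \dots is not important, as long as they grow quickly enough''), so the lemma is not even provable from the paper alone and there is no in-paper argument to compare yours against. That said, your skeleton --- induction on $d$, with a threshold $t^\ast\approx x_{d,k-1}/x_{d,k}$ splitting the segment into a ``vertical'' piece matched against the rung $aa'$ and a ``horizontal'' piece matched against $\sigma(a',b)$ via the inductive hypothesis --- is exactly the argument of the cited source, and your claims (i) and (ii) are precisely the properties the construction is engineered to satisfy.

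As written, however, this is an outline rather than a proof, and the gap you flag at the end is genuine, not cosmetic. Both directions of the final step require compressing a chain of closeness assertions into a single $1$-closeness, and the only general tool available, the triangle inequality $\dist(a,b)\le\dist(a,c)+\dist(b,c)$, turns two $1$-close links into a $2$-close conclusion --- which is exactly what the lemma forbids. Concretely, a point $q$ of $\sigma(\bar a,\bar b)$ whose inductive witness $\bar r(t)$ happens to have $t<t^\ast$ is, by the argument you give, only guaranteed to be $2$-close to a point of $r([t^\ast,1])$; the same problem recurs when you match the upper part of the rung $aa'$ to $r(t^\ast)$. Closing this requires either strengthening the inductive statement (e.g.\ asserting that a witness can always be chosen with the relevant coordinates on the correct side) or invoking the concrete multiplicative form of $\prec_i$, under which ``between two $1$-close points'' does imply $1$-closeness to one of them; without one of these the induction does not close. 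In short: right approach, correctly located obstruction, but the obstruction is acknowledged rather than overcome.
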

\begin{corollary}\label{cor_nivasch}
Suppose $a,b,a',b'$ are points in $BB(G_s)$. If the segments $ab$ and 
$a'b'$ intersect, then there are points $c\in \sigma(a,b)$ and 
$c'\in\sigma(a',b')$ that are $2$-close.
\end{corollary}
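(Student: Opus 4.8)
The plan is to use the intersection point of the two segments as a pivot, and then invoke Nivasch's lemma (Lemma~2.11) twice together with the triangle inequality for $\dist$ recorded just before the corollary.

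First I would fix a point $p$ lying in $ab\cap a'b'$; such a point exists precisely because the hypothesis says the two segments intersect. Applying Nivasch's lemma to the pair $a,b$: since $p\in ab$, there is a point $c\in\sigma(a,b)$ that is $1$-close to $p$, i.e.\ $\dist(p,c)\leq 1$. Applying the same lemma to the pair $a',b'$: since $p\in a'b'$, there is a point $c'\in\sigma(a',b')$ with $\dist(p,c')\leq 1$. These are the two points whose existence we want to assert.

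It then remains to combine the two estimates. Using the triangle inequality $\dist(c,c')\leq\dist(c,p)+\dist(p,c')$, we get $\dist(c,c')\leq 1+1=2$, so $c$ and $c'$ are $2$-close, which is exactly the conclusion of the corollary.

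The argument is entirely routine, so there is no genuine obstacle; the only points one must keep straight are bookkeeping ones — that ``$k$-close'' was defined via $\dist\le k$, so that the two unit bounds add correctly, and that $\dist$ has already been shown to satisfy the triangle inequality on $BB(G_s)$. In short, the corollary is a two-line deduction from Nivasch's lemma.
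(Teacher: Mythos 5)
Your proof is correct and is essentially identical to the paper's: both pick a point $p$ in $ab\cap a'b'$, apply Nivasch's lemma to get $c\in\sigma(a,b)$ and $c'\in\sigma(a',b')$ each $1$-close to $p$, and conclude $\dist(c,c')\le 2$ by the triangle inequality. No issues.
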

\begin{proof} Let $c$ and $c'$ be $1$-close to $ab\cap a'b'$. Then $\dist(c,c')\leq 2$ holds by the triangle inequality. \end{proof}

\paragraph{Proof of Theorem~\ref{thm_constr}}
We shall now describe a straight-line drawing with few space crossings.
From now on we fix $d=3$ and pick a particular choice of stretched grid $G_s=G_s(5n)$
with $(5n)^3$ points. We shall also work with the subgrid $G_s'\subset G_s$
that consists of the points of the form $(x_{1i_1},x_{2i_2},x_{3i_3})$
with $5\mid i_1,i_2,i_3$. The subgrid $G_s'$ has $n^3$ points.
Let $p(i)=(x_{1i},x_{2i},x_{3i})$ be the $i$'th point on the ``diagonal'' of~$G_s$. 

Let $G$ be the graph on the vertex set $\{1,2,3,\dotsc,n\}$ 
with $i$ and $j$ forming an edge if $\abs{i-j}\leq D$,
where $D=2m/n$. The \emph{standard drawing of $G$} is 
one in which the vertex $i\in V(G)$ is represented by the point 
$p(5i)$, and all the edges are straight-line segments. 
Note that in this drawing all the vertices lie on the subgrid $G_s'$,
and thus no pair of them is $5$-close. Moreover, if the stair-paths
$\sigma(a_1,b_1)$ and $\sigma(a_2,b_2)$ with $a_1,a_2,b_1,b_2\in G_s'$
do not intersect, then no point of $\sigma(a_1,b_1)$ is $5$-close to
a point of $\sigma(a_2,b_2)$.

We say that four stair-paths form a \emph{space stair-crossing} if there 
is another stair-path that meets all four stair-paths. The \emph{standard
stair-drawing of $G$} is one in which vertex $i\in V(G)$ is represented
by the point $p(5i)$, and all edges are stair-paths.

The following two  lemmas imply Theorem~\ref{thm_constr}.
\begin{lemma}\label{lem_stair_approx}
Let $s_1,t_1,\dotsc,s_4,t_4\in V(G)$ be eight distinct
vertices of $G$. Then the edges $s_1t_1,\dotsc,s_4t_4$
form a space crossing in the standard drawing of $G$
only if they form a space stair-crossing
in the standard stair-drawing of $G$.
\end{lemma}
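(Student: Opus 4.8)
The plan is to transfer a space crossing in the straight-line standard drawing to a space stair-crossing in the standard stair-drawing, using Corollary~\ref{cor_nivasch} to control the images of segments under $\pi$. Suppose the segments $s_1t_1,\dots,s_4t_4$ form a space crossing in the standard drawing, witnessed by a line $\ell$ meeting all four segments; say $\ell$ meets $s_it_i$ in a point $q_i$. First I would push everything through the order-preserving bijection $\pi\colon BB(G_s)\to[0,1]^d$: the four vertices sit on $G_s'$, hence the images of any pair are not $5$-close. For each pair of indices $i\ne j$, apply Corollary~\ref{cor_nivasch} to the two segments $s_it_i$ and $s_jt_j$ — but these do not intersect each other in general, so I cannot apply it directly to pairs of the $s_it_i$. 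Instead the relevant intersecting segments are $q_iq_j\subset\ell$ (a subsegment of the line $\ell$) and $s_it_i$: for each $i$ the segment of $\ell$ between $q_i$ and, say, $q_1$ intersects $s_it_i$ at $q_i$. More cleanly: fix the subsegment $L=[q_1,q_4]$ of $\ell$; then $L$ meets each $s_it_i$ (after replacing $L$ by $[q_1,q_i]$ if necessary, or just noting $L$ contains all four $q_i$ if we order them along $\ell$). So for each $i$, the segments $L$ and $s_it_i$ intersect at $q_i$, and Corollary~\ref{cor_nivasch} gives points $c_i\in\sigma(q_1,q_4)$ — the stair-path of the chosen subsegment of $\ell$ — and $c_i'\in\sigma(s_i,t_i)$ with $c_i$ and $c_i'$ being $2$-close.

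Now I would produce the witnessing stair-path for the space stair-crossing. The natural candidate is $\sigma(q_1,q_4)$, the stair-path between the two extreme intersection points along $\ell$; this is a legitimate stair-path in $BB(G_s)$ (though its endpoints need not be grid points, which is fine — stair-paths are defined for arbitrary points). We have $c_i\in\sigma(q_1,q_4)$ and $c_i'\in\sigma(s_i,t_i)$ with $\dist(c_i,c_i')\le 2$ for each $i=1,\dots,4$. This is not quite "the witnessing stair-path meets each edge stair-path," but only "comes $2$-close to it." The key step is to absorb this slack of $2$ into the graph structure, using the generous spacing built into the construction: the vertices live on the subgrid $G_s'$ (only every fifth point in each coordinate), so the relevant cushion is $5$, and $2<5$.

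The hard part will be converting the four "$2$-close" incidences into an honest space stair-crossing, i.e.\ producing a single stair-path that genuinely meets all four edge stair-paths $\sigma(s_i,t_i)$. I expect this is handled by the observation already flagged in the text: if $\sigma(a_1,b_1)$ and $\sigma(a_2,b_2)$ with all endpoints on $G_s'$ do not intersect, then no point of one is $5$-close to a point of the other. Applying the contrapositive, since $c_i$ (on the $\ell$-stair-path) is $2$-close, hence $5$-close, to $c_i'\in\sigma(s_i,t_i)$, the stair-path $\sigma(q_1,q_4)$ — or rather, a slightly fattened/adjusted stair-path through neighbouring grid points — must actually intersect $\sigma(s_i,t_i)$. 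One must be a little careful because $\sigma(q_1,q_4)$ is not a stair-path between grid points; I would either (i) replace $q_1$ and $q_4$ by nearby grid points of $G_s$ and invoke that a $2$-perturbation keeps everything within the $5$-cushion, or (ii) argue directly that two stair-paths whose point sets come within $\dist\le 2$ of each other must cross, since a stair-path, being a monotincreasing staircase, cannot "pass by" another within distance $2$ without meeting it. Either way, the conclusion is that $\sigma(q_1,q_4)$ (suitably interpreted) meets all four of $\sigma(s_1,t_1),\dots,\sigma(s_4,t_4)$, which is exactly a space stair-crossing in the standard stair-drawing. Finally I would note the eight vertices are distinct by hypothesis, so the stair-crossing is between genuinely vertex-disjoint edges, completing the proof.
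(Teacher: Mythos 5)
Your proposal follows the paper's argument: take the witnessing line, pass to its stair-path via Corollary~\ref{cor_nivasch} to get $2$-closeness to each edge stair-path, then round the endpoints to the subgrid and invoke the observation that two stair-paths with endpoints on $G_s'$ that come $5$-close must intersect. Your finishing option (i) is exactly what the paper does, with two details to fix: the rounding must be to points of the \emph{subgrid} $G_s'$ (not $G_s$), since the $5$-close-implies-intersect observation requires all four endpoints of both stair-paths to lie on $G_s'$; and the perturbation of each endpoint is by distance at most $3$ (every point of $BB(G_s)$ is within $3$ of a $G_s'$-point in each coordinate), so the triangle inequality gives $2+3=5$, which is exactly the cushion available --- your phrase ``a $2$-perturbation keeps everything within the $5$-cushion'' has the arithmetic slightly off. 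Your alternative option (ii) is not valid as stated: two stair-paths can come within distance $2$ of each other without intersecting (this is precisely why the construction passes to the sparser subgrid $G_s'$ with spacing $5$), so you cannot avoid the rounding step. A further small simplification in the paper: it takes $r_1,r_2$ to be the intersections of the line with the bounding box rather than the extreme points $q_1,q_4$ among the four edge intersections, which spares you from ordering the $q_i$ along $\ell$; either choice works.
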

\begin{lemma}\label{lem_stair_char}
Let $s_1,t_1,\dotsc,s_4,t_4\in [0,1]$ be distinct
vertices of $G$. 
Let $I_i=[s_i,t_i]$ be the interval spanned by $s_i$ and $t_i$.
Then the four vertex-disjoint edges $s_1t_1,\dotsc,s_4t_4$ 
form a stair-crossing in the standard stair-drawing of 
$G$ only if for each $i=1,\dotsc,4$ there is
at least one $j\neq i$ so that $I_i\cap I_j\neq \emptyset$
\end{lemma}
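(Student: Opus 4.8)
Assume $s_i<t_i$ for each $i$, so that the diagonal points $p(5s_i)$ and $p(5t_i)$ are comparable in every coordinate. Specializing the recursive definition of $\sigma(\cdot,\cdot)$ to a comparable pair shows that $P_i:=\sigma(p(5s_i),p(5t_i))$ is a union of three axis-parallel segments: a $z$-segment $Z_i$ with $x$- and $y$-coordinates frozen at level $5s_i$ and $z$ ranging over the levels $[5s_i,5t_i]$; a $y$-segment $Y_i$ with $x$ frozen at level $5s_i$, $z$ frozen at level $5t_i$, and $y$ ranging over $[5s_i,5t_i]$; and an $x$-segment $X_i$ with $y$ and $z$ frozen at level $5t_i$ and $x$ ranging over $[5s_i,5t_i]$. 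In particular the range of levels attained by $P_i$ in each coordinate is $[5s_i,5t_i]$, which corresponds to $I_i$ because $\pi$ preserves the order in each coordinate. Write the transversal as $Q=\sigma(v,v')$, with $v$ the endpoint of smaller last coordinate, and let $Z_Q,Y_Q,X_Q$ be its three axis-parallel segments; the crucial bookkeeping point is that in each of them the two coordinates \emph{not} being traversed are frozen at values taken from $v$ or $v'$, hence at an endpoint of the corresponding coordinate's range of levels. Let $L_k$ denote the closed interval of levels attained by $Q$ in the $k$-th coordinate, $k=1,2,3$; its endpoints are the $k$-th coordinate levels of $v$ and of $v'$. (If $v$ and $v'$ agree in a coordinate then $Q$ has fewer segments; those cases are easier and I omit them.)

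\textbf{Two facts.} (a) If $Q$ meets $P_i$ then for every $k$ the intervals $L_k$ and $[5s_i,5t_i]$ overlap, since the crossing point lies in the $k$-th coordinate range of both; hence, if $Q$ meets all four $P_i$, then each $I_i$ meets each of $L_1,L_2,L_3$. (b) Tracking which segment of $Q$ and which segment of $P_i$ carries the crossing point, the coincidences forced among the frozen coordinates pin one endpoint of $I_i$ to an endpoint of some $L_k$, and the remaining coordinate comparisons give auxiliary inclusions (of $5s_i$ or $5t_i$ in some $L_k$, or of an endpoint of some $L_k$ in $[5s_i,5t_i]$). Both facts rest only on the elementary observation that two axis-parallel segments meet iff their frozen coordinates agree and the traversed ranges overlap appropriately.

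\textbf{Main argument.} Suppose for contradiction that some $I_i$---say $I_1$---is disjoint from $I_2,I_3,I_4$. If $I_1$ is neither leftmost nor rightmost among the four, then some $I_m$ lies entirely to the left of $I_1$ and some $I_M$ entirely to its right; since $I_m,I_1,I_M$ all meet every $L_k$, the interval $L_k$ contains $I_1$ in its strict interior for every $k$, contradicting fact (b) for $I_1$. Hence $I_1$ is leftmost, say (the rightmost case is symmetric under negating all coordinates, which preserves stair-paths and the diagonal), so $5t_1<5s_j$ for $j\ge 2$. Since $I_1$ and $I_2$ both meet each $L_k$ while $5t_1<5s_2$, every $L_k$ satisfies $\min L_k\le 5t_1<\max L_k$. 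Feeding this into the enumeration behind fact (b) for $I_1$: any crossing of $Q$ with $P_1$ that does not lie on the $z$-segment $Z_Q$ has its $z$-level equal to $\max L_3$, yet it must lie in $[5s_1,5t_1]$, forcing $\max L_3\le 5t_1$, which is false. So the crossing lies on $Z_Q$, whence its $x$- and $y$-coordinates are those of $v$, and as it lies in $P_1$ these have level at most $5t_1$. But then neither $Z_Q$ nor $Y_Q$---each of which carries this small $x$-coordinate (and $Z_Q$ also the small $y$-coordinate)---can meet any of $P_2,P_3,P_4$, whose $x$-ranges begin above level $5t_1$; hence $X_Q$ alone must meet all three. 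Now $X_Q$ has exactly two frozen coordinates, its $y$-level and its $z$-level, which equal $\max L_2$ and $\max L_3$ respectively, and each of the three ways $X_Q$ can cross $P_j$ (against $Z_j$, $Y_j$, or $X_j$) forces one of these two levels to equal an endpoint of $I_j$. Since $I_2,I_3,I_4$ have six distinct endpoints, two levels cannot be endpoints of all three intervals---a contradiction.

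\textbf{Where the work is.} The one genuinely unavoidable chore is the case analysis underlying fact (b) and the eliminations in the main argument: for each of the (at most) nine pairs (segment of $Q$, segment of $P_i$) one must read off exactly which coordinate coincidences a crossing imposes; verify that when $I_1$ is leftmost only the three pairs involving $Z_Q$ survive; and verify that each of $X_Q$ against $Z_j$, $Y_j$, $X_j$ pins $\max L_2$ or $\max L_3$ to an endpoint of $I_j$. Each check is merely ``frozen coordinate equals frozen coordinate'' versus ``frozen coordinate lies in a range'', so nothing is deep, but keeping this bookkeeping straight---in effect, exploiting that a stair-path's frozen coordinates are the coordinates of only two points and so behave like a single stretched-grid edge---is the substance of the proof.
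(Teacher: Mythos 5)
Your overall strategy—reduce everything to ``a frozen coordinate of a segment of the transversal must equal a frozen coordinate of a segment of an edge-path'' and then run out of frozen coordinates—is sound, and it is genuinely different in organization from the paper's proof: the paper argues directly (not by contradiction), embedding the transversal in a four-coordinate ``stair-line,'' observing that vertex-disjointness forces a bijection between the four coordinates of the stair-line and the four edges, and then reading off $I_i\cap I_j\neq\emptyset$ from the second frozen coordinate of each intersection point. That argument needs no leftmost/rightmost case split and no pigeonhole at the end, so it is considerably shorter; your version is a hands-on unwinding of the same underlying fact.

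There is, however, a concrete gap: the claim that the rightmost case ``is symmetric under negating all coordinates, which preserves stair-paths'' is false. The recursive definition of $\sigma(a,b)$ always freezes the leading coordinates at the values of the endpoint with the \emph{smaller} last coordinate, so negation sends $\sigma(a,b)$ to the staircase hugging the opposite corner of the bounding box, which is not $\sigma(-a,-b)$ (already in the plane, $\sigma((0,0),(1,1))$ passes through $(0,1)$, while $\sigma((0,0),(-1,-1))$ passes through $(-1,0)$, not $(0,-1)$). Consequently the rightmost case is simply not covered by your argument as written. It can be repaired, but not by quoting the leftmost case: when $I_1$ is rightmost the crossing with $P_1$ is no longer forced onto $Z_Q$, and one must instead split on which of $Z_Q,Y_Q,X_Q$ carries it; in each subcase a \emph{different} pair of segments of $Q$ is excluded from meeting $P_2,P_3,P_4$ (e.g.\ if the crossing is on $X_Q$, then $X_Q$ and $Y_Q$ are excluded because their frozen $z$-level $\max L_3$ exceeds the $z$-range of every $P_j$, and the pigeonhole is applied to the two frozen coordinates of $Z_Q$). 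A second, smaller gap: in the leftmost case you assert that $Y_Q$ carries the frozen $x$-coordinate $v_1$, but the $2$-dimensional recursion freezes the $x$-coordinate of whichever endpoint has the smaller $y$-coordinate, so a priori it could be $v'_1$. This is rescued by what you have already shown ($v_2\leq$ the $y$-value at level $5t_1 <\max L_2$ forces $v_2=\min L_2$, hence $v_2<v'_2$), but the justification should be stated, since the same orientation issue is exactly what breaks the negation symmetry.
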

\begin{proof}[Proof that Lemmas~\ref{lem_stair_approx} and
\ref{lem_stair_char} imply Theorem~\ref{thm_constr}]
The graph $G$ has $n$ vertices and 
more than $D n/2=m$ edges. 
The four vertex-disjoint edges $s_1t_1,\dotsc,s_4t_4\in E(G)$ form
a space crossing only if the union of the four 
intervals $[s_1,t_1],\dotsc,[s_4,t_4]$
has at most two connected components.

There are $\frac{1}{4!}\binom{2}{2}\binom{4}{2}\binom{6}{2}\binom{8}{2}=105$
order types of four unlabeled endpoint-disjoint intervals (each order
type corresponds to a perfect matching on $8$ labeled points).
Each order type that consists of  
$r\leq 2$ connected component gives rise to at most $n^r D^{8-r}$ space
crossings in the standard drawing of $G$. Indeed, there are $n^r$
ways to choose the leftmost points of the intervals, and once
those are specified, it only suffices to specify the distances between
the consecutive points in a connected component, and these
distances are bounded by $D$. Thus, there are at most 
$105n^2 D^6=6720 m^6/n^4$ space crossing in the standard drawing of $G$. 
\end{proof}

\begin{proof}[Proof of Lemma~\ref{lem_stair_approx}]
Suppose $s_1t_1,\dotsc,s_4t_4$ are edges of $G$ 
forming a space crossing, and let $l$ be the line that 
meets these four edges. Let $r_1$ and $r_2$ be the 
intersection points of $l$ with $BB(G_s')$. Then by 
Corollary~\ref{cor_nivasch} the stair-path $\sigma(r_1,r_2)$
is $2$-close to the stair-paths
$\sigma(s_1,t_1),\dotsc,\sigma(s_4,t_4)$.

Let $r_1'$ and $r_2'$ be the points of $G_s'$ such that
$\dist(r_1,r_1')\leq 3$ and $\dist(r_2,r_2')\leq 3$. 
Since $\sigma(r_1,r_2)$ is $2$-close to $\sigma(s_1,t_1)$,
by the triangle inequality, $\sigma(r_1',r_2')$ is $5$-close
to $\sigma(s_1,t_1)$. Since $r_1',r_2',s_1,t_1$ belong to
$G_s'$, that means that $\sigma(r_1',r_2')$ and $\sigma(s_1,t_1)$
intersect. Similarly, $\sigma(r_1',r_2')$ intersects $\sigma(s_i,t_i)$
for $i=1,\dotsc,4$, and the edges $s_1t_1,\dotsc,s_4t_4$ 
form a stair-crossing.
\end{proof}

\begin{proof}[Proof of Lemma~\ref{lem_stair_char}]
Every stair-path is a subset of one of the three 
types of sets 
\begin{enumerate}
\item $L_1(x_0,y_0,y_1,z_1)\defeq \sigma((x_0,y_0,0),(+\infty,y_1,z_1))$,
\item $L_2(x_0,y_0,y_1,z_1)\defeq \sigma((x_0,y_0,0),(-\infty,y_1,z_1))$,
\item $L_3(x_0,y_0,x_1,z_1)\defeq \sigma((x_0,y_0,0),(x_1,-\infty,z_1))$.
\end{enumerate}
We call $L_1,L_2,L_3$ \emph{stair-lines}. The numbers 
$x_0,y_0,\dotsc$ are the \emph{coordinates} of stair-lines.

Let $L$ be a line that meets the four edges $s_1t_1,\dotsc,s_4t_4$.
We say that $L$ shares the coordinate with the edge $s_it_i$ if that coordinates 
is equal to either $s_i$ or $t_i$. Note that
since the edges are vertex-disjoint, no coordinate
of $L$ can be shared with two distinct edges.
Since the edge $s_it_i$ is represented by the stair-path
$\sigma((s_i,s_i,s_i),(t_i,t_i,t_i))$, it
meets $L$ only if it shares a coordinate with $L$.
It follows that each of the four coordinates of 
$L$ is shared with a unique edge.

Suppose the edge $s_it_i$ shares the coordinate $c$ with $L$.
Let $p_i$ be an intersection point of $L$ with $s_it_i$.
The point $p_i$ shares at least two coordinates with $L$,
one of which is $c$. The other coordinate $c'$ is between 
$s_i$ and $t_i$. Thus if $s_jt_j$ 
is the edge that shares the coordinate $c'$ with $L$,
then the intervals $[s_i,t_i]$ and $[s_j,t_j]$ intersect.
\end{proof}

\section*{The proof of Lemma~\ref{lem_sametype}}\label{pf_lem_sametype}
To simplify the proof we will need 
to work in slightly greater
generality than subsets of a fixed Euclidean space.
First, we permit $\F_1,\F_2,\dotsc$ to be multisets (this will
come handy in the proof of Theorem~\ref{semialgst}),
and we permit different multisets to belong to different spaces.
To keep track of these spaces, we introduce 
a bit of notation. For a $k$-tuple $\bd=(d_1,\dotsc,d_k)\in \N^k$ 
we define $\R^{\bd}\defeq\R^{d_1}\times\dotsb\times \R^{d_k}$. 
For simplicity of notation we adopt the 
convention that $\R^{d_i}$ denotes the $i$'th component
of $\R^{\bd}$ even if there are several $i$ that share
the same value of $d_i$. 
The \emph{number of terms} of a polynomial $f$ on $i$'th component,
denoted $t_i(f)$,
is the number of monomials of $f$ when treated as a polynomial
on $\R^{d_i}$ (with other coordinates treated as fixed). 
For example, if $\bd=(1,1)$ and $f\colon \R^\bd\to \R$ is defined by
$f(x_1,x_2)=4+2x_1x_2+3x_1x_2^2+x_1x_2^3+7x_1^2x_2$, then
$t_1(f)=3$, and $t_2(f)=4$, though $f$ has five terms.
The sign of a number $x\in \R$ is $+1$ if $x>0$, $-1$ if $x<0$
and $0$ if $x=0$.

By Tarski--Seidenberg theorem (see \cite[Theorem~2.77]{basu_pollack_roy}) 
every semialgebraic relation is
equivalent to a quantifier-free semi-algebraic relation. Thus the
following result implies Lemma~\ref{lem_sametype}.

\begin{theorem}\label{semialgst}
Let $f_1,\dotsc,f_J\colon \R^{\bd}\to \R$ be a family of polynomials.
Suppose $\F_i\subset \R^{d_i}$ for $i=1,\dotsc,k$ are
finite multisets of points. Then there are submultisets
$\F_i'\subset \F_i$ such that
\begin{enumerate}
\item $\F_i'$ are large: $\abs{\F_i'}\geq \veps \abs{\F_i}$,
with $\veps=\prod_{j=1}^J 3^{-3^{t_2(f_j)+\dotsb+t_k(f_j)}}$;
\item For each $j$ the sign of $f_j(p_1,\dotsc,p_k)$ is same for all
the choices of $p_i\in \F_i'$.
\end{enumerate}
\end{theorem}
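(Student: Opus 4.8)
The plan is to prove Theorem~\ref{semialgst} by induction on the total number of polynomials $J$, peeling off one polynomial at a time, and within the treatment of a single polynomial $f_j$ to induct on the coordinates $k, k-1, \dotsc, 2$, using a Veronese-type linearization in each coordinate to reduce a polynomial sign condition to a hyperplane sign condition, which is then a ham-sandwich / partition argument. First I would record the base case: when $J=0$ there is nothing to prove and we take $\F_i'=\F_i$ with $\veps=1$. For the inductive step, suppose we have polynomials $f_1,\dotsc,f_J$; apply the theorem for $f_1,\dotsc,f_{J-1}$ to get submultisets in which the signs of $f_1,\dotsc,f_{J-1}$ are all constant, and then work inside those submultisets to make the sign of $f_J$ constant while only losing a further factor $3^{-3^{t_2(f_J)+\dotsb+t_k(f_J)}}$ in each coordinate. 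So the crux is the single-polynomial statement: given one polynomial $f\colon\R^{\bd}\to\R$ and multisets $\F_1,\dotsc,\F_k$, find large submultisets on which $\sign f$ is constant, with the stated $\veps$.

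For the single-polynomial case I would induct downward on a coordinate index $i$ running from $k$ down to $2$ (the coordinate $1$ is free — once the sign is constant in coordinates $2,\dotsc,k$ for one representative of $\F_1$, I will need to also fix coordinate $1$, so really the induction runs over all of $2,\dotsc,k$ and $1$ is handled symmetrically). The key device: fix all coordinates except the $i$'th, so that $f$ becomes a polynomial in $x_i\in\R^{d_i}$ with $t_i(f)$ monomials; write $f = \sum_{\ell} c_\ell(x_{\ne i})\, m_\ell(x_i)$ where $m_1,\dotsc,m_{t_i(f)}$ are the distinct monomials in $x_i$. The Veronese map $\phi_i\colon\R^{d_i}\to\R^{t_i(f)}$, $\phi_i(x_i)=(m_1(x_i),\dotsc,m_{t_i(f)}(x_i))$, turns the sign of $f$ into the sign of a \emph{linear} functional $\langle c(x_{\ne i}),\phi_i(x_i)\rangle$ applied to the lifted points. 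Now I apply a same-type partition result for points versus affine hyperplanes in $\R^{t_i(f)}$: a set of $N$ points in $\R^{t_i(f)}$ can be split into $3^{t_i(f)}$ (or fewer — the exact base is what gives the $3^{3^{t_i(f)}}$ tower) cells such that any hyperplane's sign pattern is "the same type" relative to the cell structure; dually, restricting to a well-chosen cell forces every hyperplane to be sign-constant on the cell, at the cost of a factor $3^{-3^{t_i(f)}}$ in the size of $\F_i$. Concretely, I would use the standard fact (a corollary of Rado's centerpoint theorem, or of the Yao--Yao partition, or a direct iterated-ham-sandwich argument) that any finite point set in $\R^{D}$ has a subset of relative size $\ge 3^{-3^{D}}$ such that, for some orthant-like position, every halfspace either contains it entirely or is disjoint from it — this is exactly the mechanism behind the Bárány--Valtr same-type lemma. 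Iterating this over $i=k,k-1,\dotsc,2$ (at each stage working inside the submultisets already chosen for the later coordinates, so the coefficient vectors $c(x_{\ne i})$ range over whatever survives) and finally over coordinate $1$ as well, the sign of $f$ becomes globally constant on $\F_1'\times\dotsb\times\F_k'$, and the multiplicative size losses compound to $\prod_{i=2}^{k} 3^{-3^{t_i(f)}} = 3^{-3^{t_2(f)+\dotsb+t_k(f)}}$ after one checks that $\sum 3^{a_i} \le 3^{\sum a_i}$, which is where the particular shape of the exponent comes from. Multiplying across $j=1,\dotsc,J$ gives the stated $\veps = \prod_{j=1}^{J} 3^{-3^{t_2(f_j)+\dotsb+t_k(f_j)}}$.

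The main obstacle I anticipate is getting the bookkeeping of the iterated partition exactly right so that the quantities match the claimed $\veps$ — in particular, the order of quantification. When I make $\sign f$ constant in the $i$'th coordinate I am choosing $\F_i'$ based on the set of coefficient vectors $\{c(x_{\ne i}) : x_j\in\F_j, j\ne i\}$, but that set depends on the $\F_j$ for $j\ne i$, which I am \emph{also} shrinking at other stages. The clean way to handle this is to process the coordinates in a fixed order and, at the stage for coordinate $i$, shrink only $\F_i$, using the \emph{current} (already possibly shrunk) sets for the other coordinates to generate the coefficient vectors — since later shrinking of $\F_j$ ($j\ne i$) only removes coefficient vectors, sign-constancy over the larger vector set is preserved. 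One must be slightly careful that shrinking $\F_i$ at its own stage does not affect the coefficient vectors used at a stage for $j\ne i$ where $x_i$ is one of the fixed coordinates — this is why the downward induction on $i$, combined with the observation that the coefficient polynomials $c_\ell$ at the $i$'th stage still genuinely involve $x_1,\dotsc,x_{i-1}$ but are treated as a finite set of linear functionals rather than re-partitioned, makes everything consistent. A secondary technical point is handling multisets (repeated points): the partition lemmas are usually stated for point sets, but they extend verbatim to multisets by treating multiplicities as weights, and the relative-size bound $\ge\veps|\F_i|$ is a statement about total multiplicity. I would state the weighted point-hyperplane same-type lemma as a self-contained sublemma at the start and cite the Bárány--Valtr / Yao--Yao machinery for its (standard) proof, then spend the bulk of the write-up on the Veronese reduction and the compounding of constants.
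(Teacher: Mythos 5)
There is a genuine gap, and it sits at the heart of your single\hyp{}polynomial argument. The ``standard fact'' you invoke --- that any finite point (multi)set in $\R^{D}$ has a subset of relative size at least $3^{-3^{D}}$ such that every halfspace either contains it entirely or is disjoint from it --- is false. Any subset containing two distinct points is split by some halfspace; more to the point, if at the stage for coordinate $i$ you must make \emph{every} hyperplane from the family $\{c(x_{\neq i}) : x_j\in\F_j,\ j\neq i\}$ sign-constant on $\F_i'$, you are asking for a cell of the arrangement of up to $\prod_{j\neq i}\abs{\F_j}$ hyperplanes that captures a constant fraction of $\F_i$, and no such cell need exist (already in $\R^1$: take $N$ distinct points and the $N-1$ separating thresholds; the only sign-constant subsets are singletons). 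So the coordinate-by-coordinate iteration, each step costing only a factor $3^{-3^{t_i(f)}}$ independent of the other coordinates, cannot work as described. A telltale sign is that your claimed loss $\prod_{i=2}^{k}3^{-3^{t_i(f)}}$ is strictly \emph{better} than the theorem's $3^{-3^{t_2(f)+\dotsb+t_k(f)}}$; the inequality $\sum_i 3^{a_i}\leq 3^{\sum_i a_i}$ only shows your bound would imply the stated one, not that your argument produces either.

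What the correct argument (and the paper's) does instead is linearize in the \emph{last} coordinate only, and use the Yao--Yao/Lehec partition, whose essential feature is not that some fixed cell is sign-constant for all hyperplanes, but that \emph{every} halfspace through the common apex contains \emph{some} one of the $2^{t_k(f)}$ simplices --- with the containing simplex depending on the halfspace. The question ``which simplex does $H^{\pm}(x_1,\dotsc,x_{k-1})$ contain?'' is then determined by the signs of the $M\leq 3^{t_k(f)}$ polynomials $g_m(x_1,\dotsc,x_{k-1})=f'(x_1,\dotsc,x_{k-1},v_m)$ at the simplex vertices $v_m$, and one recurses on $k-1$ coordinates with these $M$ polynomials to make all those signs constant, so that a \emph{single} simplex $\Delta_j$ works uniformly. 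This recursion is exactly what produces the tower $3^{-3^{t_2+\dotsb+t_k}}$: each level multiplies the number of polynomials fed to the next level by $3^{t_k(f)}$, which multiplies into the exponent. Your proposal omits this mechanism entirely --- you never reduce ``which cell'' to a sign condition on the remaining coordinates --- and without it the step for each coordinate has no valid justification. The outer reduction (peeling off one polynomial at a time to get the product over $j$, and the trivial treatment of coordinate $1$, which is why $t_1$ is absent from $\veps$) is fine and matches the paper.
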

(The fact that the expression for $\veps$ does not depend on 
$t_1(f_j)$ is not a typo, but an artifact of the proof.)

We use a version of Yao--Yao lemma 
\cite{yao_yao} due to Lehec \cite{lehec}. 
Recall that a \emph{convex cone} of vectors
$v_1,\dotsc,v_r$ is the set of all their non-negative linear
combinations, $\convcone(v_1,\dotsc,v_r)\defeq\sum a_i v_i$ with $a_i\geq 0$.
\begin{lemma}[Theorem~3 and Proposition~4 in \cite{lehec}]\label{lem_yao_yao}
Let $\mu$ be a probability Borel measure on $\R^d$ such that $\mu(H)=0$
on every affine hyperplane $H$. Then there is a way choose
the origin of the coordinates in $\R^d$ and $2^d$ convex cones such that:
\begin{enumerate}
\item The union of the cones is $\R^d$, and the cones are disjoint
apart from the boundaries;
\item Each cone has measure $1/2^d$ with respect to $\mu$;
\item Every closed halfspace that contains the origin also contains
one of the cones;
\item Each cone is a convex cone of only $d$ vectors.
\end{enumerate}
\end{lemma}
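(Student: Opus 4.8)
I would prove Theorem~\ref{semialgst} by induction on the number $k$ of variables, treating $f_1,\dots,f_J$ one at a time. The base case $k=1$ is trivial: given a single polynomial $f$ in one variable, partition the current $\F_1$ into $\{f>0\},\{f=0\},\{f<0\}$ and keep the largest class, a loss of $3^{-1}=3^{-3^0}$; the empty exponent sum here is exactly why $t_1$ never contributes to $\veps$, matching the parenthetical remark. Thus the content is the inductive step, $k\ge 2$ assuming the theorem in $k-1$ variables, and it suffices to handle one polynomial $f=f_j$ (we may assume $f\not\equiv 0$, so $t:=t_k(f)\ge 1$): shrink each $\F_i$ by a factor at least $3^{-3^{t_2(f)+\dots+t_k(f)}}$ while making $\sign f$ constant. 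Doing this for $f_1,\dots,f_J$ in turn, always only shrinking so that signs fixed earlier stay fixed, multiplies these factors into $\veps$.

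To handle $f$, isolate the last variable: write $f(x_1,\dots,x_k)=\sum_{\alpha=1}^{t}c_\alpha(x_1,\dots,x_{k-1})\,m_\alpha(x_k)$ with $m_1,\dots,m_t$ the distinct monomials of $f$ in $x_k$, so that with $v(x_k):=(m_1(x_k),\dots,m_t(x_k))\in\R^{t}$ and $c:=(c_1,\dots,c_t)$ we have $f=\langle c(x_1,\dots,x_{k-1}),v(x_k)\rangle$. Apply the Yao--Yao partition (Lemma~\ref{lem_yao_yao}), after a routine regularization, to the uniform measure on the multiset $v(\F_k)\subset\R^{t}$: this gives an origin $o\in\R^t$ and $2^{t}$ cones $K_i=o+\convcone(w_{i,1},\dots,w_{i,t})$ whose union is $\R^t$, each containing at least $\abs{\F_k}/2^{t}$ of the points $v(q)$ ($q\in\F_k$), with every closed halfspace containing $o$ containing some $K_i$. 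Set $g_{i,l}(x_1,\dots,x_{k-1}):=\langle c(x_1,\dots,x_{k-1}),w_{i,l}\rangle$ and $g_0(x_1,\dots,x_{k-1}):=\langle c(x_1,\dots,x_{k-1}),o\rangle$; every monomial of these in any of $x_1,\dots,x_{k-1}$ already occurs in $f$, so $t_r(g_{i,l}),t_r(g_0)\le t_r(f)$ for $r\le k-1$. Apply the inductive hypothesis to these $2^{t}t+1$ polynomials and the current $\F_1,\dots,\F_{k-1}$; since $2^{t}t+1\le 3^{t}$ for every $t\ge 1$, the loss on each of $\F_1,\dots,\F_{k-1}$ is at least $\prod_h 3^{-3^{t_2(h)+\dots+t_{k-1}(h)}}\ge 3^{-(2^{t}t+1)\,3^{t_2(f)+\dots+t_{k-1}(f)}}\ge 3^{-3^{t_2(f)+\dots+t_k(f)}}$, and on the survivors every $\sign g_{i,l}$ and $\sign g_0$ is a fixed constant.

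The crux is converting this into constancy of $\sign f$. For any surviving $p=(p_1,\dots,p_{k-1})$ the fixed sign of $g_0(p)=\langle c(p),o\rangle$ determines whether $o$ lies in $\{x:\langle c(p),x\rangle\ge 0\}$ or in $\{x:\langle c(p),x\rangle\le 0\}$; assume the former, the other case being symmetric. That halfspace then contains some cone $K_{i^*}$, so the fixed signs of $g_{i^*,1},\dots,g_{i^*,t}$ are all $\ge 0$, and writing $x\in K_{i^*}$ as $x=o+\sum_l a_l w_{i^*,l}$ with $a_l\ge 0$ gives $\langle c(p),x\rangle=g_0(p)+\sum_l a_l g_{i^*,l}(p)\ge 0$ for every surviving $p$. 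Moreover the zero locus of $\langle c(p),\cdot\rangle$ on $K_{i^*}$ is independent of $p$: it is empty unless $g_0$ vanishes identically on the survivors, in which case it equals $o+F$ with $F$ the subcone of $K_{i^*}-o$ spanned by those $w_{i^*,l}$ for which $g_{i^*,l}$ vanishes identically there. So keep the $q\in\F_k$ with $v(q)\in K_{i^*}$ (factor $2^{-t}$), then the larger of the two parts according to whether $v(q)$ lies in that zero locus (factor at most $2$); as $2^{-t-1}\ge 3^{-3^{t_2(f)+\dots+t_k(f)}}$ this is within budget, and on the surviving product $f(p_1,\dots,p_k)=\langle c(p_1,\dots,p_{k-1}),v(p_k)\rangle$ is either identically $0$ or strictly positive, hence of constant sign.

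The real obstacle is exactly this conversion. Fixing the signs of the coefficients $c_\alpha$, or of the pairings $\langle c,w\rangle$ over a single cone, does not determine $\sign f$, because on $K_{i^*}$ the value $f=g_0(p)+\sum_l a_l g_{i^*,l}(p)$ is a nonnegative combination that can be $0$ or positive depending on which face of $K_{i^*}$ the lifted point $v(q)$ meets; the device of first picking a cone forced into the correct halfspace by the Yao--Yao covering property (using the auxiliary polynomial $g_0$ to locate the origin $o$ relative to that halfspace) and then splitting $\F_k$ by the subcone $F$ --- which turns ``$f=0$?'', an all-variables condition, into the single-variable condition ``$v(q)\in o+F$?'' --- is what makes it work, and the count $2^t t+1\le 3^t$ (tight at $t=1,2$) is precisely what produces the claimed constant $3^{t_2+\dots+t_k}$. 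A secondary, routine point is the passage from Lemma~\ref{lem_yao_yao} (stated for measures vanishing on hyperplanes) to the finite multiset $v(\F_k)$; since the argument uses only that the cones are $\convcone$'s of $t$ vectors --- never that these are independent --- and the face split already tolerates lifted points on cone boundaries, a standard regularization suffices and no general-position hypothesis on $v(\F_k)$ is needed.
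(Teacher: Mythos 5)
Your proposal is correct and essentially coincides with the paper's proof of Theorem~\ref{semialgst}: linearize in the last block of variables by lifting $\F_k$ through its monomials, apply the Yao--Yao partition to the lifted multiset, run the induction on the $\leq 3^{t_k(f)}$ auxiliary polynomials coming from the cone apex and generators, and finish by splitting the surviving part of $\F_k$ along the common zero face of the selected cone. The only difference is cosmetic: you feed the apex $o$ and the generators $w_{i,l}$ directly into the recursion, whereas the paper (Corollary~\ref{cor_yao_yao}) first truncates each cone to a bounded $t_k(f)$-simplex and uses its vertices, yielding the same vertex count $1+t_k(f)\,2^{t_k(f)}\leq 3^{t_k(f)}$.
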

By the standard approximation argument it follows that if $\F$ is any 
finite multiset of points
in $\R^d$, then there is a partition of $\R^d$ as in the lemma above such that
each cone contains at least $\abs{\F}/2^d$ points of $\F$. Furthermore,
if $\convcone(\{v_1,\dotsc,v_d\})$ is one of the convex cones, then
for large enough $t>0$ we have 
$\convcone(\{v_1,\dotsc,v_d\})\cap P=\conv(\{0,tv_1,\dotsc,tv_d\})\cap P$.
We thus obtain
\begin{corollary}\label{cor_yao_yao}
Suppose $\F$ is a finite multiset in $\R^d$. Then there is a 
point $p$ and $2^d$ $d$-dimensional closed simplices 
$\Delta_1,\dotsc,\Delta_{2^d}\subset \R^d$ such that
\begin{enumerate}
\item The interiors of the simplices $\Delta_1,\dotsc,\Delta_{2^d}$ are disjoint;
\item For each $j=1,\dotsc,2^d$ the number of points in $j$'th simplex is 
$\abs{\F\cap \Delta_j}\geq \abs{\F}/2^d$;
\item The point $p$ is a vertex of each $\Delta_j$ for $j=1,\dotsc,2^d$;
\item Every closed halfspace that contains $p$ also contains one of $\Delta_j$'s.
\end{enumerate}
\end{corollary}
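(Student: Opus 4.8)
The plan is to derive Corollary~\ref{cor_yao_yao} from Lemma~\ref{lem_yao_yao} in two steps: (i) replace the atomic counting measure on $\F$ by an absolutely continuous one and pass to a limit---this is the ``standard approximation argument'' alluded to before the statement---and (ii) truncate the resulting simplicial cones to simplices. For step~(i), fix a small $\veps>0$ and let $\mu_\veps$ be the probability measure obtained by spreading the mass of each point $x\in\F$ (counted with multiplicity) uniformly over the ball $B(x,\veps)$ and normalising to total mass $1$. Then $\mu_\veps$ is absolutely continuous, hence vanishes on affine hyperplanes, and Lemma~\ref{lem_yao_yao} supplies an apex $p_\veps$ and $2^d$ cones $C^\veps_j=p_\veps+\convcone(v^\veps_{j,1},\dots,v^\veps_{j,d})$ (the $v^\veps_{j,i}$ taken to be unit vectors) enjoying properties~1--4. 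Property~3 pins down the apex: if $p_\veps$ lay outside $\conv(\operatorname{supp}\mu_\veps)$, a strict separation would give a closed halfspace through $p_\veps$ disjoint from $\operatorname{supp}\mu_\veps$ yet, by property~3, containing a cone of mass $2^{-d}>0$, which is absurd; hence $p_\veps\in\conv(\F)+\overline{B(0,1)}$ for $\veps\le1$ and the apices stay bounded. Using this and the compactness of $\Sph^{d-1}$, pass to a subsequence $\veps\to0$ with $p_\veps\to p$ and $v^\veps_{j,i}\to v_{j,i}$ for all $i,j$, and set $C_j\defeq p+\convcone(v_{j,1},\dots,v_{j,d})$. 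All four properties survive the limit: $\bigcup_jC_j=\R^d$ and the interiors remain pairwise disjoint (closed conditions stable under this convergence), every closed halfspace through $p$ contains some $C_j$, and each $C_j$ contains at least $\abs{\F}/2^d$ points of $\F$---the last because a point of $\F$ at positive distance from $C_j$ contributes $0$ to $\mu_\veps(C^\veps_j)=2^{-d}$ once $\veps$ is small, so for every $\veps$ at least $\abs{\F}/2^d$ points of $\F$ lie within distance $\veps$ of $C^\veps_j$, and a finite pigeonhole over the (finitely many) subsets of $\F$ produces a fixed such set that lands inside $C_j$.

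For step~(ii), the set $\F\cup\{p\}$ is bounded, say $\F\cup\{p\}\subseteq B(p,M)$. If $C_j$ is full-dimensional the generators $v_{j,1},\dots,v_{j,d}$ are linearly independent, and then on $C_j$ the coefficients in $x-p=\sum a_i v_{j,i}$ are bounded by a constant times $\norm{x-p}$; consequently there is $t_j$ such that $C_j\cap B(p,2M)\subseteq\conv(\{p,\,p+t\,v_{j,1},\dots,p+t\,v_{j,d}\})$ for all $t\ge t_j$. Choose $t$ exceeding every $t_j$ and put $\Delta_j\defeq\conv(\{p,\,p+t\,v_{j,1},\dots,p+t\,v_{j,d}\})$. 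Then $\Delta_j$ is a $d$-dimensional simplex with $p$ as a vertex, $\Delta_j\subseteq C_j$, and $\F\cap\Delta_j=\F\cap C_j$ because $\F\subseteq B(p,2M)$; hence $\abs{\F\cap\Delta_j}\ge\abs{\F}/2^d$. Since $\Delta_j\subseteq C_j$ and $\operatorname{int}\Delta_j\subseteq\operatorname{int}C_j$, the interiors of the $\Delta_j$ are pairwise disjoint, and any closed halfspace containing $p$ contains some $C_j\supseteq\Delta_j$. This is exactly what Corollary~\ref{cor_yao_yao} asserts.

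The only genuinely delicate point is the tacit use, in step~(ii), of the full-dimensionality of each limit cone $C_j$ (without it $\Delta_j$ fails to be a $d$-simplex). For each fixed $\veps$ this is automatic: a cone carrying positive $\mu_\veps$-mass cannot lie in a hyperplane, as $\mu_\veps$ is absolutely continuous, so the $v^\veps_{j,i}$ are linearly independent. A priori this independence could be lost in the limit; excluding that---or, alternatively, observing that a mildly degenerate limit cone lies in a hyperplane and so can be thickened to a genuine simplex with apex $p$ containing the same points of $\F$ without disturbing the other simplices---is exactly the bookkeeping that the phrase ``standard approximation argument'' is meant to cover. Everything else (boundedness of the apices, compactness, the pigeonhole transporting the counting bound through the limit, and the elementary cone-to-simplex estimate) is routine.
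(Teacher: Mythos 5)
Your argument follows the same route that the paper hand-waves in its two-sentence derivation: mollify the atomic measure so that Lehec's hypotheses apply, pass to a subsequential limit of apices and generators, and truncate the resulting simplicial cones to simplices by choosing $t$ large enough that the truncation captures every point of $\F$ living in the cone. You supply the details the paper omits and, to your credit, explicitly flag the one place the ``standard approximation argument'' is not entirely mechanical: the generators of a cone can become linearly dependent in the limit, so the limit cone may be degenerate and cannot then be truncated to a $d$-simplex.

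Your proposed repair of that degeneracy, however, does not quite go through as described. If a limit cone $C_j$ collapses into a hyperplane $H$, the remaining limit cones must still cover $\R^d$ near the apex (the covering property is preserved in the limit), so any $d$-dimensional thickening of $C_j$ with apex $p$ necessarily encroaches on the interior of some neighbouring $\Delta_{j'}$, violating property~(1). Fixing this honestly requires a little more work, e.g.\ either perturbing $\F$ to ensure the limit cones stay full-dimensional, or shrinking the neighbouring simplices away from $H$ and arguing separately about points of $\F$ that lie on $H$. Two mitigating remarks are worth making: the paper's own proof is no more careful on this point, and property~(1) is in fact never invoked in the only place the corollary is used (the proof of Theorem~\ref{semialgst} uses only the count, the common apex, and the halfspace property), so the gap is harmless for the paper even though it is a real gap in the corollary as stated.
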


The following lemma is a minor variation on the standard linearization
argument (see \cite{agarwal_matousek} for example).
\begin{lemma}\label{lem_linearone}
Let $R$ be a commutative ring. Let $f\colon R^d\to R$ be a polynomial with $t$ non-constant 
terms. Then there is a map $\pi\colon R^{d}\to R^{t}$ and a linear polynomial
$f'\colon R^{t} \to R$ such that $f=f'\circ \pi$.
\end{lemma}
\begin{proof}
Let $1=g_0,g_1,\dotsc,g_{t}$ be the set of all monomials appearing in
$f$. Let $f=\sum \alpha_i g_i$. 
Define $\pi(x)=(g_1(x),\dotsc,g_{t}(x))$, and 
$f'(z_0,z_1,\dotsc,z_{t})=\sum \alpha_i z_i$. The identity 
$f=f'\circ \pi$ is clear.
\end{proof}

\begin{proof}[Proof of Theorem~\ref{semialgst}]
The proof is by induction on $k$. The base case $k=1$ is trivial
because for at least one third of all the points $x_1\in\F_1$ the 
sign of $f_j(x_1)$ is the same.
Suppose $k\geq 2$, and the theorem is known to hold for $k-1$.
It suffices to prove the result for a single polynomial, which 
we shall call $f$. 
Think of $f$ as a polynomial with $t_k(f)$ terms 
on $\R^{d_k}$.  By Lemma~\ref{lem_linearone} there is 
map $\pi\colon \R^{d_k}\to \R^{t_k(f)}$ and a polynomial 
$f'\colon \R^{\bd'}\to \R$, which is linear on $\R^{t_k(f)}$,
such that 
\begin{equation*}
f(x_1,x_2,\dotsc,x_{k-1},x_k)=f'(x_1,x_2,\dotsc,x_{k-1},\pi(x_k)).
\end{equation*}

Apply Corollary~\ref{cor_yao_yao} to the multiset
$\pi(\F_k)$.
Let $\Delta_1,\dotsc,\Delta_{2^{d_k'}}\subset \R^{t_k(f)}$ be the 
simplices whose existence the corollary guarantees. They have a
total of at most $1+t_k(f') 2^{t_k(f)}\leq 3^{t_k(f)}$ 
vertices, which we denote by $v_1,\dotsc,v_M$ where $M\leq 3^{t_k(f)}$.
Each of the simplices contains at least%
\footnote{It is here that we use that $\F_k$ is a multiset. If $\F_k$
was defined to be a set, then $\pi(\F_k)$ might have 
had fewer elements than $\F_k$.} 
$\abs{\F_k}/2^{t_k(f)}$ points of $\pi(\F_k)$.

Since the polynomial $f'$ is linear in $x_k$ each choice of 
$x_i\in \R^{d_i}$ ($i=1,\dotsc,k-1$) gives the hyperplane in $\R^{t_k(f)}$,
namely the hyperplane $H(x_1,\dotsc,x_{k-1})=\{x\in\R^{d_k'} : 
f'(x_1,\dotsc,x_{k-1},x)=0\}$. Define $H^+(x_1,\dotsc,x_{k-1})$
and $H^-(x_1,\dotsc,x_{k-1})$ as the two closed halfspaces 
that $H(x_1,\dotsc,x_{k-1})$ bounds in the obvious way. 
By Corollary~\ref{cor_yao_yao}
either $H^-$ or $H^+$ contains some $\Delta_j$.

For each point $v_m$ define the polynomial $g_m$ by
$g_m(x_1,\dots,x_{k-1})=f'(x_1,\dotsc,x_{k-1},v_m)$.
Note that the indices $j$ for which $\Delta_j$ is contained in 
$H^+(x_1,\dotsc,x_{k-1})$ depends only on the signs 
of $g_m(x_1,\dotsc,x_{k-1})$, and
similarly for the indices $j$ for which $\Delta_j\subset 
H^-(x_1,\dotsc,x_{k-1})$.

Since $t_i(g_m)\leq t_i(f)$ for each $i=1,\dotsc,k-1$,
by the induction hypothesis there are subsets 
$\F_i'\subset \F_i$ for $i=1,\dotsc,k-1$ 
of size $\abs{\F_i'}\geq 3^{-M\cdot 3^{t_2(f)+\dotsb+t_{k-1}(f)}} \abs{\F_i}$
such that for all $m=1,\dotsc,M$ the sign of 
$g_m(x_1,\dotsc,x_{k-1})$ does not depend on the choice of $x_i\in \F_i'$.
Denote this sign by $\epsilon_m$. 
Therefore there is a single
$j$ and a non-zero sign $s$ such that $\Delta_j$ 
is contained in $H^s(x_1,\dotsc,x_{k-1})$
for each choice $x_i\in \F_i'$. Without loss of generality
$s=+1$, which means that $\epsilon_m\geq 0$ for every
vertex $v_m$ of $\Delta_j$. Let $\sigma$ be the face of 
$\Delta$ spanned by the vertices $v_m$ for which $\epsilon_m=0$.
Thus 
$f'(x_1,\dotsc,x_{k-1},x_k)=0$ ($x_i\in \F_i'$) if $x_k\in \sigma$ and 
$f'(x_1,\dotsc,x_{k-1},x_k)>0$ if $x_k\in \Delta_j\setminus \sigma$.
One of the two alternatives holds for at least half of the points
in $\pi(\F_k)\cap \Delta_j$, and since $2^{-t_k(f)-1}\geq 3^{-3^{t_k(f)}}$
the theorem follows.
\end{proof}

\section*{Higher dimensions and other questions}
Throughout the paper we spoke of ``the'' space crossing number, but
it is but one of a family of similar quantities. Similarly to the way
the crossing number measures the complexity of planar embeddings, these 
quantities measure the complexity of embeddings into higher-dimensional 
Euclidean spaces. We give some examples: 
\begin{enumerate}
\item For an embedding of graph $G\to \R^3$ one can count not the quadruples, but 
the triples of edges crossed by a line. The methods in this paper
easily adapt to show that the corresponding crossing number $\crn_3(G)$
satisfies $\crn_3(G)\geq c(\abs{E}^4/\abs{V}^2\log^2\abs{V})$, and there
are graphs $G$ such that $\rcrn_3(G)\leq C(\abs{E}^4/\abs{V}^2)$.
\item For an embedding of a graph $G\to \R^4$ there are at least two 
kinds of objects to consider: the lines that pierce three edges of $G$, 
and $2$-planes that pierce $6$ edges of $G$. The simple dimension-counting
shows that for generic embeddings, there are finitely many such lines and 
$2$-planes.
\item More generally, one can count the number of $(d-2)$-dimensional
planes through $2(d-1)$ edges of $G\to \R^d$. The case $d=2$ is the classical
crossing number, whereas $d=3$ is the space crossing number of the present paper.
Theorem 3 from \cite{karasev_highdim} can be used to give lower bounds
on these higher crossing numbers, but we lack the constructions for the upper bounds.
 
\item One can consider representations of a $3$-uniform hypergraph in $\R^4$ 
by means of (topological) triangles, and count the number of triples
of triangles that meet at a single point. However, it is an open problem even
to shown that in every $3$-uniform hypergraph with more than $Cn^2$ edges there is a 
single pair of intersecting triangles!
\end{enumerate}

There are several more question about the crossing numbers we defined:
\begin{enumerate}
\item 
A
result about crossing number with many applications is the bisection
width inequality (proved independently in \cite[Theorem~2.1]{pach_shahrokhi_szegedy_bisect}, 
extending the proof in \cite{orig_crossing_leighton} for the bounded-degree graphs). 
The bisection width inequality states that
$\crn(G)^2\geq c_1 b^2(G)-c_2\sum_{v\in V(G)} \deg_G(v)^2$, where $b(G)$ is the bisection width
of the graph $G$. Is there an analogous inequality for the
space crossing number that is of comparable strength to Theorem~\ref{thm_expand}?
\item Is the family of graphs with $\crn_4(G)=0$ a minor-closed family? 
\item Is it true that $\crn_4(G)=0$ if and only if $\rcrn_4(G)=0$?
\end{enumerate}
We guess that the answers are 1) yes, 2) no, and 3) no.

\paragraph{Acknowledgments.} We thank Andr\'as Juh\'asz for discussions about Lemma~\ref{lem_linking}.
We also grateful to him and Saul Schleimer for helping us to find \cite{viro}.  We thank J\'anos Pach
and Boris Aronov for helpful conversations. The work was partly done with support
of Discrete and Computational Program in Bernoulli Centre, EFPL, Lausanne. The second
author acknowledges the support of Conacyt.

\bibliographystyle{alpha}
\bibliography{spacecrossing}


\end{document}